                                                         \title{Friedman's  $  \mathsf{WD} $  is not parameter-free sequential}
                                                        \author{Juvenal Murwanashyaka}
                                                        \affil{Institute of Mathematics,  Czech Academy of Sciences,   Czech Republic}
                                                      \newtheorem{theorem}{Theorem} 
                                                      \newtheorem{lemma} [theorem] {Lemma}
                                                       \newtheorem{corollary} [theorem] {Corollary}
                                                      \newtheorem{definition} [theorem] {Definition} 
                                                       \newtheorem{open problem} [theorem] {Open Problem}
\begin{document}

\maketitle 
 
\begin{abstract}
Harvey Friedman's $  \mathsf{WD} $  is  a  weak set  theory  given by the following  non-logical  axioms:
\textsf{(W)}   $  \forall x y   \,  \exists z  \,    \forall u   \left[    \,   u  \in z  \leftrightarrow   (    \,    u \in x   \;  \vee   \;    u  =  y     \,    )      \,      \right]    $; 
\textsf{(D)}  $   \forall x y   \,  \exists z  \,    \forall u   \,   \left[    \,    u  \in z  \leftrightarrow   (    \,    u \in x   \;  \wedge   \;    u  \neq  y     \,    )      \,       \right]    $. 
We answer   a   question  raised  by Albert Visser which  asks whether  $  \mathsf{WD} $  is  parameter-free sequential. 
Let  $  \mathsf{WD} +  \mathsf{EXT} $ denote the theory we obtain by extending  $  \mathsf{WD} $ with  the axiom of extensionality. 
We show that   $   \mathsf{WD} +  \mathsf{EXT}  $, and hence also  $   \mathsf{WD} $, 
  is not parameter-free sequential by using forcing  to  construct a model $    \mathcal{V}^{  \star }  $   of $     \mathsf{WD} +  \mathsf{EXT}   $      where 
$  \left(   \mathcal{V}^{  \star }  ,  a   \right)   \simeq   \left(   \mathcal{V}^{  \star }  ,   b   \right)    $  
 for any two elements  $a, b $  of  $  \mathcal{V}^{  \star }  $. 
\end{abstract}

\section{Introduction  }

Adjunctive set theory   $  \mathsf{AS} $    is  a  weak set theory given by the following two non-logical axioms: 
\[
\begin{array}{r l  c  c r l  }
\mathsf{AS}_1 
& 
 \exists x  \;  \forall y   \;   [    \   y \not\in x    \   ]   
\\
\mathsf{AS}_2 
& 
 \forall x y   \;  \exists z  \;    \forall u   \;   [    \   u \in z  \leftrightarrow   (    \    u \in x   \;  \vee   \;    u  =  y     \    )      \      ]    
 \end{array}
\] 
The structure  $  \left(  V_{  \omega }   \,  ,  \,   \in  \right)  $  of  hereditarily finite sets  is  a minimal model of  $  \mathsf{AS} $. 
The axioms of  $  \mathsf{AS} $  go back to   Bernays     \cite{Barnays1937}. 
 In \cite{Pudlak1983},    Pudlák  uses $  \mathsf{AS} $  for  a formal characterization of   sequential   theories.
 Sequential  theories are   theories with a coding machinery, for  all objects in the domain of the theory,    sufficient for developing 
partial  satisfaction predicates for formulas of bounded depth-of-quantifier-alternations  (see Visser \cite{Visser2019}). 
Formally, a first-order theory  $T$ is  \emph{sequential}   if it  $1$-directly interprets  $  \mathsf{AS} $, that is, 
there exists a  first-order formula  $  \phi ( x, y, z_0,  \ldots , z_n ) $  in the language of  $T$, 
with all free variable displayed,  such that   for any model $  \mathcal{M} \models  T $   with universe $  M$,  
there exist  $  p_0,  \ldots , p_n  \in M  $  such that 
   $  ( M, R_{  \vec{p}  } )  \models    \mathsf{AS}  $  where 
   $  R_{  \vec{p} }  :=  \lbrace (u, v )  \in  M^2  :   \   \mathcal{M} \models  \phi  (u, v , p_0,  \ldots  , p_n  )  \,  \rbrace     \,  $.

  Friedman \cite{Friedman2007}[p.~17] has introduced a variant $  \mathsf{WD} $  given by the following two non-logical axioms: 
\[
\begin{array}{r l  c  c r l  }
\mathsf{W}
& 
\forall x y   \,  \exists z  \,    \forall u   \left[    \,   u  \in z  \leftrightarrow   (    \,    u \in x   \;  \vee   \;    u  =  y     \,    )      \,      \right] 
\\
\mathsf{D} 
& 
  \forall x y   \,  \exists z  \,    \forall u   \left[     \,   u \in z  \leftrightarrow   (    \,    u  \in x   \;  \wedge   \;    u  \neq  y     \,    )      \,      \right]       
 \end{array}
\] 
  Friedman \cite{Friedman2007}[Theorem 3.1]  uses $  \mathsf{WD} $  to give an alternative characterization of sequential theories: 
   a first-order theory  $T$ is   sequential   if and only if  it $1$-directly interprets  $  \mathsf{WD} $.
In particular,  $  \mathsf{WD} $ is sequential: 
given  a model  $   \left( M,  \in  \right)   \models  \mathsf{WD} $  and  $  p  \in  M  $, 
we have  $  \left(  M,  R_p \right)   \models   \mathsf{AS} $  where  
\[
  R_p =  \lbrace    (x, y )  \in  M^2  :   \    \left( M,  \in  \right)   \models    x  \in  y  \leftrightarrow   x  \not\in p    \,   \rbrace      
\       .
\]


In this paper, we show that the two  weak set theories  $  \mathsf{AS} $  and  $  \mathsf{WD}  $ are rather different: 
we   construct a model   $  \mathcal{V}^{  \star }  =  \left(   V^{  \star }   ,  \in^{  \star }  \right)   $  of  $  \mathsf{WD} $
where for any two elements  $ a, b  \in V^{  \star  }  $  there exists an automorphism $  F :   \mathcal{V}^{  \star }   \to   \mathcal{V}^{  \star }   $  such that  $  F(a) = b $. 
In contrast, for any automorphism  $  G $  of a model  $  \mathcal{M} $ of  $  \mathsf{AS} $, 
we have $  G( a )   \not\in  V_{  \omega }  \setminus  \lbrace a  \rbrace  $  for all $  a  \in V_{  \omega  }   $. 
In particular,  $  \left(  V^{  \star }   ,  R   \right)   \not\models  \mathsf{AS}  $ 
 for  any binary relation $  R  \subseteq  V^{  \star }  \times  V^{  \star }  $  that is 
 first-order definable  in   $     \mathcal{V}^{  \star }   $  without parameters. 
This  answers   a  question  of     Visser \cite{Visser2010}[p.~243]  which asks whether    $  \mathsf{WD} $ is a parameter-free sequential theory.
A first-order theory  $T$ is \emph{parameter-free sequential} if there exists a  first-order formula   $  R (x, y ) $  in the language of  $T$, with only $x, y $ free,  such that  $  \left(  M,   R^{  \mathcal{M} }   \right)   \models  \mathsf{AS}  $  for all  $  \mathcal{M} \models  T $   with universe $  M  $; 
we write  $  R^{  \mathcal{M} }   $  for  $  \lbrace  (x,y )  \in M^2  :  \   \mathcal{M} \models  R(x, y )   \,  \rbrace  $.
The reasoning above shows that  the existence of the model    $     \mathcal{V}^{  \star }   $ implies   that  $  \mathsf{WD} $  is not  parameter-free sequential. 
The theory   $  \mathsf{WD} $   is thus    an essentially parametrically sequential theory, 
which means the theory  needs a parameter to witness its sequentiality.

We will   prove a slightly  stronger statement. 
Let  $  \mathsf{EXT} $  denote the axiom of extensionality 
$  \forall x  y   \left[  \,   \forall u  \left[   \,  u  \in x  \leftrightarrow  u  \in y   \,   \right]    \rightarrow  x  =  y    \,    \right]     \,    $. 
Let  $  \mathsf{BU} $ denote the  binary union axiom 
$
 \forall x y  \,  \exists z   \,    \forall u    \left[    \   u  \in z  \leftrightarrow     (   \,   u \in x   \;   \vee  \;  u  \in y    \,  )      \       \right]       
\,      $. 
Let  $  \mathsf{BI} $ denote the  binary intersection axiom 
$
 \forall x y  \,  \exists z   \,    \forall u    \left[    \   u  \in z  \leftrightarrow     (   \,   u \in x   \;   \wedge  \;  u  \in y    \,  )      \       \right]       
\,      $. 
We  prove the following theorem.

\begin{theorem}  \label{maintheorem}
There exists a model  $  \mathcal{V}^{ \star }    $    of   
 $  \mathsf{WD}  +  \mathsf{EXT}   +   \mathsf{BU}  +  \mathsf{BI}$
 such that   $  \left(   \mathcal{V}^{  \star }  ,  a   \right)   \simeq   \left(   \mathcal{V}^{  \star }  ,   b   \right)    $  
 for any two elements  $a, b $  of  $  \mathcal{V}^{  \star }  $. 
\end{theorem}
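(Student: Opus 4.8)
The plan is to build a countable model by forcing and then exploit the genericity of the resulting structure to manufacture automorphisms. First I would recast the problem combinatorially. Identifying each element $a$ of the intended universe with its extension $\hat{a} = \{u : u \in^\star a\}$, the axiom $\mathsf{EXT}$ says exactly that $a \mapsto \hat{a}$ is injective, and $\mathsf{W}, \mathsf{D}, \mathsf{BU}, \mathsf{BI}$ say that the self-indexed family $\mathcal{F} = \{\hat{a} : a \in V^\star\}$ is closed under $X \mapsto X \cup \{b\}$, $X \mapsto X \setminus \{b\}$, $(X,Y) \mapsto X \cup Y$ and $(X,Y) \mapsto X \cap Y$. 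The homogeneity assertion $(\mathcal{V}^\star, a) \simeq (\mathcal{V}^\star, b)$ is then precisely vertex-transitivity of the directed graph $(V^\star, \in^\star)$.

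The crucial observation guiding the construction is that $\mathsf{WD}$, in contrast to $\mathsf{AS}$, never forces an empty set to exist. Were an empty set present, it would be definable by $\mathsf{EXT}$ as the unique member with no elements, and no automorphism could move it; the same rigidity arises from any element small enough to be fixed by the closure operations. Consequently I must arrange that every element is infinite, that every element belongs to infinitely many elements, and --- this is the delicate point for $\mathsf{BI}$ --- that $\mathcal{F}$ has the finite intersection property, so that $X \cap Y$ is always infinite and the intersection axiom never manufactures the empty set.

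For the forcing itself I would take conditions to be finite approximations consisting of a finite partial decision of the relation $\in^\star$ together with a finite set of definitional commitments of the forms $z = a \cup \{b\}$, $z = a \setminus \{b\}$, $z = a \cup b$, $z = a \cap b$, required to be mutually consistent, functional (so that witnesses are unique, honouring $\mathsf{EXT}$), and to preserve the finite intersection property. A sequence of routine density arguments then shows that for a generic filter the relation $\in^\star$ is total and that $\mathcal{V}^\star \models \mathsf{WD} + \mathsf{EXT} + \mathsf{BU} + \mathsf{BI}$: for each instance of a closure axiom the conditions that assign and commit a witness are dense, and for each pair of distinct points the conditions supplying a separating member are dense. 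Genericity simultaneously yields a richness property, namely that every finite pattern of memberships and non-memberships consistent with the committed structure is eventually realized.

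Finally, homogeneity is obtained by a back-and-forth argument. Given $a, b \in V^\star$, I would start from the partial map $a \mapsto b$ and extend it alternately so as to cover every point of the domain and of the range, at each step using the richness property to find, for a newly chosen point $c$, an image $c'$ whose membership relations with the already-matched points, and whose behaviour under the four committed operations, mirror those of $c$; the union of the resulting chain is the desired automorphism. The main obstacle lies exactly here: the commitments witnessing $\mathsf{W}, \mathsf{D}, \mathsf{BU}, \mathsf{BI}$ impose rigid algebraic relations that any automorphism must respect, so the heart of the proof is an amalgamation lemma guaranteeing that a finite partial isomorphism still extends to an arbitrary new point in the presence of these constraints, all while keeping the finite intersection property intact. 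Verifying that the four operations are jointly compatible with this extension step --- and hence that genericity delivers a fully vertex-transitive model rather than one harbouring definable, automorphism-rigid elements --- is the technical crux of the argument.
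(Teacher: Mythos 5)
Your high-level strategy --- a countable structure built by forcing, homogeneity via back-and-forth, the observation that an empty set (or any automorphism-rigid element) must be avoided, and that extensions and pairwise intersections must be kept infinite --- matches the paper's. But there is a genuine gap, and it sits exactly where you yourself place ``the technical crux'': the amalgamation/extension lemma is asserted, not proved, and in your setup it is doubtful it can be proved as stated. In the finished model \emph{every} instance of the four operations exists, so the constraint that a partial isomorphism must respect is not its behaviour on the finitely many matched points together with the finitely many operation instances committed by a condition; it is its behaviour on the closure of those points under the operations, and this closure is infinite (already closing $\{a\}$ under $x \mapsto x \setminus \{b\}$, as $b$ ranges over previously produced elements, generates infinitely many elements that any automorphism extending $a \mapsto b$ must track). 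A finite map respecting $\in^\star$ on its domain need not extend to an automorphism, so ``finite partial isomorphism plus richness'' is not a preservable invariant; one must formulate and carry along an invariant about these infinite closures, and that is where the entire difficulty of the theorem lives.

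The paper resolves precisely this point with two design decisions your proposal lacks. First, it fixes the family of extensions in advance: $\mathcal{X}$ is the set of subsets of the universe differing finitely from $\{\mathsf{c}_k : k \in \omega\}$. This family is automatically closed under adjunction, subtraction, binary union and intersection, all its members are infinite with infinite pairwise intersections, and --- the key point --- it is maximally homogeneous, since any two members differ finitely. The forcing then builds only a bijection $(\cdot)^{\star}$ from the universe onto $\mathcal{X}$ with $u \in u^{\star}$, so $\mathsf{WD}+\mathsf{EXT}+\mathsf{BU}+\mathsf{BI}$ holds for free and all density arguments concern homogeneity alone; in particular there are no definitional commitments whose derived equalities you would have to police to protect $\mathsf{EXT}$ (if $z$ is committed to be $a \cup \{b\}$ and $w$ to be $z \cup \{b\}$, extensionality forces $w = z$, and your ``functionality'' clause must detect all such entailed identifications). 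Second, conditions are allowed to have \emph{infinite} domain, so that the closure $\Delta\left(\sigma \, ; \, \vec{x}\right)$ of a finite tuple under the operations can lie wholly inside the domain of a single condition; the back-and-forth invariant is then that the finite map extends to an isomorphism of these infinite closures (the relation $\mathsf{Good}_2$), and the paper's Lemmas 4--7 prove exactly the extension facts you deferred: every condition extends to one whose tuples have fully realized closures, any two points generate isomorphic closures, and a $\mathsf{Good}_2$ configuration extends over any new point. Without an analogue of this machinery, your sketch does not go through.
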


\begin{corollary}  
 $  \mathsf{WD} +   \mathsf{EXT}    +   \mathsf{BU}  +  \mathsf{BI}$ is not  parameter-free  sequential. 
\end{corollary}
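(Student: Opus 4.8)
The plan is to derive the Corollary from Theorem~\ref{maintheorem} by exploiting the fact that parameter-free definable relations are preserved by all automorphisms. Write $T := \mathsf{WD} + \mathsf{EXT} + \mathsf{BU} + \mathsf{BI}$ and argue by contradiction. Suppose $T$ were parameter-free sequential, witnessed by a formula $R(x,y)$ with only $x,y$ free; by definition this means $(M, R^{\mathcal{M}}) \models \mathsf{AS}$ for every $\mathcal{M} \models T$ with universe $M$. First I would instantiate this at the model $\mathcal{V}^{\star} = (V^{\star}, \in^{\star})$ supplied by Theorem~\ref{maintheorem}, which satisfies $T$; hence $(V^{\star}, R^{\mathcal{V}^{\star}}) \models \mathsf{AS}$.

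The key step is to observe that, because $R$ carries no parameters, the relation $R^{\mathcal{V}^{\star}}$ is invariant under every automorphism of $\mathcal{V}^{\star}$: if $F : \mathcal{V}^{\star} \to \mathcal{V}^{\star}$ is an automorphism then $\mathcal{V}^{\star} \models R(a,b) \Leftrightarrow \mathcal{V}^{\star} \models R(F(a), F(b))$ for all $a,b \in V^{\star}$, so $F$ is simultaneously an automorphism of $(V^{\star}, R^{\mathcal{V}^{\star}})$. Consequently every parameter-free definable subset of $V^{\star}$ is fixed setwise by the whole automorphism group. I would apply this to the ``$R$-empty'' class
\[
S := \{\, x \in V^{\star} : \mathcal{V}^{\star} \models \forall y\, \neg R(y, x) \,\},
\]
which is parameter-free definable, hence invariant. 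Now I invoke the strong conclusion of Theorem~\ref{maintheorem}: since $(\mathcal{V}^{\star}, a) \simeq (\mathcal{V}^{\star}, b)$ for all $a,b$, the automorphism group acts transitively on $V^{\star}$, so any invariant subset is either empty or all of $V^{\star}$. Thus $S = \emptyset$ or $S = V^{\star}$.

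Both alternatives contradict $(V^{\star}, R^{\mathcal{V}^{\star}}) \models \mathsf{AS}$. If $S = \emptyset$, then no element is $R$-empty and $\mathsf{AS}_1$ (asserting an empty set) fails. If $S = V^{\star}$, then $R^{\mathcal{V}^{\star}} = \emptyset$; taking any $x,y \in V^{\star}$ and $u = y$, the adjunction equivalence $u \in z \leftrightarrow (u \in x \vee u = y)$ becomes false $\leftrightarrow$ true, so no witness $z$ exists and $\mathsf{AS}_2$ fails. Either way $(V^{\star}, R^{\mathcal{V}^{\star}}) \not\models \mathsf{AS}$, the desired contradiction; hence no parameter-free $R$ can work and $T$ is not parameter-free sequential. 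The same argument applies verbatim to any subtheory still interpreted by $\mathcal{V}^{\star}$, so $\mathsf{WD}$ itself is not parameter-free sequential.

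Given Theorem~\ref{maintheorem}, this deduction is essentially routine, and the only point demanding care is the invariance-plus-transitivity dichotomy: being sure that ``parameter-free'' genuinely yields invariance under the full group, and that transitivity then collapses $S$ to a trivial set. The real obstacle of the paper lies upstream, in the Theorem I am permitted to assume, namely the forcing construction of a model of $\mathsf{WD} + \mathsf{EXT} + \mathsf{BU} + \mathsf{BI}$ with a transitive automorphism group. Such a model can admit no parameter-free definable element, in particular no empty set, which (via $\mathsf{D}$) pushes every element to have infinitely many members; arranging enough homogeneity to achieve transitivity while still providing all adjunction, deletion, union, and intersection witnesses is where the difficulty is concentrated.
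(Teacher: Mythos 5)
Your proof is correct and follows essentially the same route as the paper: the corollary is deduced from Theorem~\ref{maintheorem} by observing that a parameter-free definable relation is invariant under every automorphism of $\mathcal{V}^{\star}$, so transitivity of the automorphism group is incompatible with $(V^{\star}, R^{\mathcal{V}^{\star}}) \models \mathsf{AS}$. The only (minor) difference is in the finishing move: the paper's sketch appeals to the fact that automorphisms of $\mathsf{AS}$-models cannot move standard hereditarily finite elements to other such elements, whereas your dichotomy on the invariant set $S$ of $R$-empty elements, which contradicts $\mathsf{AS}_1$ when $S = \emptyset$ and $\mathsf{AS}_2$ when $S = V^{\star}$, is an equally valid and arguably more self-contained way to reach the contradiction.
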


The rest of the paper is devoted to proving  Theorem  \ref{maintheorem}.

\section{The forcing partial order}

We prove Theorem  \ref{maintheorem}   by using forcing to construct a countable  model
    $   \mathcal{V}^{ \star }  =    \left(   V^{  \star }   ,  \in^{  \star }  \right)       $  of  
    $   \mathsf{WD}  +   \mathsf{EXT}    +   \mathsf{BU}  +  \mathsf{BI}  $
   such that     $  \left(   \mathcal{V}^{  \star }  ,  a   \right)   \simeq   \left(   \mathcal{V}^{  \star }  ,   b   \right)    $   
   for all $ a, b  \in  V^{  \star }      $; 
   we  remark  to the reader  that, although we use forcing terminology, this is not a relative consistency proof. 
We will ensure that  for all $  a, b  \in   V^{  \star }  $, 
   we can use  a  back-and-forth argument    to construct an automorphism  $F   :    \mathcal{V}^{ \star }   \to  \mathcal{V}^{ \star }    $
 such that  $  F \left(  a  \right)  =  b  $. 
Since  we will have   $  \left(   \mathcal{V}^{  \star }  ,  a   \right)   \simeq   \left(   \mathcal{V}^{  \star }  ,   b   \right)    $     for all $ a, b  \in  V^{  \star }      $,  one of the following  must  hold: 
(1)   $  \mathcal{V}^{  \star }    \models    \forall x  \left[  \,   x  \in  x  \,   \right]   $; 
(2)   $      \mathcal{V}^{  \star }    \models     \forall x  \left[  \,   x    \not\in  x  \,   \right]      \,    $.
 We choose to  construct a model where   $   \mathcal{V}^{ \star }   \models  \forall x  \left[  \,   x  \in  x  \,   \right]        $.

We proceed to construct the model       $   \mathcal{V}^{ \star }    $.
As usual, let  $  \mathbb{Z} $  denote the set of integers, and let $  \omega $ denote the set of nonnegative integers. 
For each  $ k \in  \mathbb{Z}  $,    pick a unique constant symbol $  \mathsf{c}_k $.  
The universe of   $  \mathcal{V}^{  \star }  $   will be  the set   
\[
  V^{  \star }  :=  \big\{    \mathsf{c}_k   :   \   k  \in \mathbb{Z}      \,    \big\}  
  \        .
 \] 
Let   $  \mathcal{X} $  denote the set of all subsets of   $    V^{  \star }  $  that differ from  
$   \lbrace    \mathsf{c}_k   :   \   k  \in \omega   \rbrace   $   by finitely many elements, that is 
\[
 \mathcal{X}  :=  
 \bigg\{    X  \subseteq  V^{  \star }    :   \  
    \vert    \left(  X  \setminus  \lbrace    \mathsf{c}_k   :   \   k  \in \omega   \rbrace   \right)  
\cup   
 \left(    \lbrace    \mathsf{c}_k   :   \   k  \in \omega     \rbrace    \setminus X \right)     \vert    <   \aleph_0 
 \      \bigg\}
 \          .
\]
The membership relation  $  \in^{  \star }  $  on   $  V^{  \star }    $ will be such that each element of $  V^{  \star }   $ realizes  a unique  set in   $   \mathcal{X}  $. 
That is, we construct a  bijection        $   ( \cdot )^{ \star }   :     V^{  \star }    \to   \mathcal{X}    $
and  define the  relation   $  \in^{  \star }  $ as follows: 
 \[
    \mathsf{c}_i   \in^{  \star }   \mathsf{c}_j  
\    \Leftrightarrow_{  \mathsf{def}  }     \       
      \mathsf{c}_i    \in   \mathsf{c}_j^{  \star }  
      \     \     \mbox{  for all }   i, j  \in  \mathbb{Z} 
      \        .
\]

\begin{lemma}   \label{FirstBasicLemma}
Let   $  (  \cdot )^{  \star }   :  V^{  \star }     \to        \mathcal{X}   $    be a  bijection.  
For all  $i, j \in   \mathbb{Z} $, let    
\[
   \mathsf{c}_j   \in^{   \star }   \mathsf{c}_k       \        \     \Leftrightarrow_{  \mathsf{def}  }      \       \       \mathsf{c}_j   \in    \mathsf{c}_k^{  \star }   
   \      .
   \]
Then, $ \left(  V^{  \star }    ,  \in^{ \star }  \right)    \models      \mathsf{WD} +  \mathsf{EXT} +       \mathsf{BU}    +  \mathsf{BI} $. 
\end{lemma}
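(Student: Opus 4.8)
The plan is to read off, from the defining equivalence, that the $\in^{\star}$-extension of each element is exactly the set it realizes: for every $k\in\mathbb{Z}$ we have $\{\,\mathsf{c}_j\in V^{\star} : \mathsf{c}_j\in^{\star}\mathsf{c}_k\,\}=\mathsf{c}_k^{\star}$, since $\mathsf{c}_k^{\star}\subseteq V^{\star}$ and every element of $V^{\star}$ is some $\mathsf{c}_j$. Extensionality is then immediate from injectivity of the bijection: if $\mathsf{c}_i$ and $\mathsf{c}_j$ have the same $\in^{\star}$-members, then $\mathsf{c}_i^{\star}=\mathsf{c}_j^{\star}$, whence $\mathsf{c}_i=\mathsf{c}_j$. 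All four existential axioms $\mathsf{W},\mathsf{D},\mathsf{BU},\mathsf{BI}$ have the same logical shape: given parameters, they assert the existence of a $z$ whose $\in^{\star}$-extension is a prescribed subset of $V^{\star}$ assembled from the extensions of the parameters. Because $(\cdot)^{\star}$ maps \emph{onto} $\mathcal{X}$, such a witness $z$ exists as soon as the prescribed subset belongs to $\mathcal{X}$.

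Thus the whole argument reduces to one closure lemma: writing $N:=\{\mathsf{c}_k : k\in\omega\}$, so that by definition $X\in\mathcal{X}$ iff the symmetric difference $X\bigtriangleup N$ is finite, the family $\mathcal{X}$ is closed under $X\mapsto X\cup\{y\}$, under $X\mapsto X\setminus\{y\}$, under $(X,Y)\mapsto X\cup Y$, and under $(X,Y)\mapsto X\cap Y$. The first two are clear since inserting or deleting a single element changes $X\bigtriangleup N$ by at most one element. For the binary operations I would invoke the standard containments $(X\cup Y)\bigtriangleup N\subseteq (X\bigtriangleup N)\cup(Y\bigtriangleup N)$ and $(X\cap Y)\bigtriangleup N\subseteq (X\bigtriangleup N)\cup(Y\bigtriangleup N)$, each verified by the routine case split on whether a point lies in $N$; these show the resulting symmetric difference stays finite whenever $X\bigtriangleup N$ and $Y\bigtriangleup N$ are finite.

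With the closure lemma established, I verify each axiom concretely. For $\mathsf{W}$, given $x=\mathsf{c}_i$ and $y=\mathsf{c}_j$, the set $\mathsf{c}_i^{\star}\cup\{\mathsf{c}_j\}$ lies in $\mathcal{X}$, so there is a $z$ with $z^{\star}=\mathsf{c}_i^{\star}\cup\{\mathsf{c}_j\}$; unwinding the definition of $\in^{\star}$ gives $u\in^{\star}z\leftrightarrow(u\in^{\star}x\,\vee\,u=y)$ for all $u$. The axioms $\mathsf{D},\mathsf{BU},\mathsf{BI}$ are dispatched identically using $\mathsf{c}_i^{\star}\setminus\{\mathsf{c}_j\}$, $\mathsf{c}_i^{\star}\cup\mathsf{c}_j^{\star}$, and $\mathsf{c}_i^{\star}\cap\mathsf{c}_j^{\star}$ respectively, each of which lies in $\mathcal{X}$ by the corresponding case of the closure lemma.

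I expect no serious obstacle: the entire content is the observation that $\mathcal{X}$, the sets of finite symmetric difference from the fixed infinite and co-infinite set $N$, is closed under finite Boolean combinations together with single-point insertions and deletions. The only step demanding any care is the symmetric-difference bookkeeping for $\mathsf{BU}$ and $\mathsf{BI}$, and even that is routine. Notably, the delicate feature of the construction, namely the specific choice of $(\cdot)^{\star}$ that later yields the automorphisms, is irrelevant to this lemma; here the bijection is used only through its surjectivity, to manufacture witnesses, and its injectivity, to secure extensionality.
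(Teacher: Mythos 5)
Your proposal is correct and follows essentially the same route as the paper: extensionality from injectivity of $(\cdot)^{\star}$, and each existential axiom ($\mathsf{W}$, $\mathsf{D}$, $\mathsf{BU}$, $\mathsf{BI}$) from surjectivity together with closure of $\mathcal{X}$ under adjunction, subtraction, binary union, and binary intersection. The only difference is that the paper asserts these closure properties of $\mathcal{X}$ without proof, whereas you verify them explicitly via the symmetric-difference containments, which is a harmless (and welcome) addition of detail.
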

\begin{proof}
The axiom of extensionality holds since  $    (  \cdot )^{  \star }  $  is a bijection. 
The axioms of  $  \mathsf{WD}  $ hold since $  \mathcal{X} $ is closed under adjunction and subtraction of elements of  $  V^{  \star }  $,  that is, 
for all   $  X \in  \mathcal{X}  $  and  all  $ k \in   \mathbb{Z} $,  we have 
$  X  \cup  \lbrace \mathsf{c}_k \rbrace \in  \mathcal{X}    $  and  $  X  \setminus   \lbrace \mathsf{c}_k \rbrace \in  \mathcal{X}    $.
The binary union axiom and the binary intersection axiom hold  since   $   \mathcal{X}   $  is closed under  binary unions and binary intersections, that is, 
for all $ X, Y \in    \mathcal{X}     $,   we have $  X \cup Y \in   \mathcal{X}     $  and $  X  \cap  Y  \in    \mathcal{X}     $.
\end{proof}

It remains to construct a suitable  bijection    $  (  \cdot )^{  \star }   :  V^{  \star }     \to        \mathcal{X}   $. 
We construct the map  $  (  \cdot )^{  \star }    $ by recursion as   the union of a suitable chain in 
 the   partial order $  \left(   \mathbb{P}   ,  \subseteq       \right)    $   defined as follows: 
 \begin{enumerate}
 \item    $   \mathbb{P}   $   is the set  of all partial one-to-one maps   $    \sigma :  V^{  \star }  \to   \mathcal{X}   $  such that: 
 \begin{enumerate}
\item    $    u  \in  \sigma (u)     $   for all  $   u   \in   \mathsf{dom} \left(  \sigma  \right)        \,     $.

 \item     $ \cap    \sigma  \left[   \mathsf{dom} \left(  \sigma  \right)    \right]    \setminus    \mathsf{dom} \left(  \sigma  \right)    $
 and  
   $ V^{  \star }  \setminus   \cup    \sigma  \left[   \mathsf{dom} \left(  \sigma  \right)    \right]    $
 are both infinite. 
  \end{enumerate}

 \item  $  \sigma  \subseteq  \tau $   if and only if  $  \sigma $ is a restriction of  $  \tau $. 
\end{enumerate}  
Clause (1a)   implies that  we will get a model where    $    \forall x  \left[  \,   x  \in  x  \,  \right]       $   holds. 
The first conjunct of (1b) implies that   $   \mathcal{X} \setminus  \sigma  \left[   \mathsf{dom} \left(  \sigma  \right)    \right]  $  is infinite: 
 for  each $ x  \in   \cap    \sigma  \left[   \mathsf{dom} \left(  \sigma  \right)    \right]    $, 
the set  $  \lbrace  \mathsf{c}_k :  \   k  \in  \omega   \,  \rbrace   \setminus  \lbrace x  \rbrace   $  
is not in the image of  $  \sigma   $  
since  $  x  \in  \sigma (u )   $   for all $  u  \in     \mathsf{dom} \left(  \sigma  \right)      \,     $. 
Similarly, the second   conjunct of (1b) implies that   
$   \mathcal{X} \setminus  \sigma  \left[   \mathsf{dom} \left(  \sigma  \right)    \right]  $   is infinite: 
 for  each $ x  \in   V^{  \star }  \setminus   \cup    \sigma  \left[   \mathsf{dom} \left(  \sigma  \right)    \right]    $, 
the set  $  \lbrace  \mathsf{c}_k :  \   k  \in  \omega   \,  \rbrace   \cup  \lbrace x  \rbrace   $  
is not in the image of  $  \sigma  $
since  $  x   \not\in  \sigma (u )   $   for all $  u  \in     \mathsf{dom} \left(  \sigma  \right)      \,     $.

Clause (1b) is a  technical requirement that  has  to do with the fact that we will  mainly be concerned  with $  \sigma \in  \mathbb{P} $  where  $  \mathsf{dom} \left(  \sigma  \right)  $  is infinite. 
The reason for this is that   the maps we will  work with will be of the form   $  f  :  W   \to   V^{  \star }  $
where $  W  \subseteq    \mathsf{dom} \left(  \sigma  \right)    $  is such that: 
\begin{itemize}
\item[(i)]  $  \sigma (x)  \setminus  \sigma (y)   \subseteq   W  $  for all $  x, y  \in W  $;

\item[(ii)]  for all   $  u  \in W $  and all  finite  sets $  A, B  \subseteq  W  $,  there exists  $  v \in  W  $  such that  
$  \sigma (v)  =  \left(   \sigma (u )  \setminus  A   \right)   \cup   B  $. 
\end{itemize}
 Clause (1b) will ensure that we can extend $  \sigma  $  to  $  \tau  \in  \mathbb{P} $ in  such a way that  certain maps of the form (i)-(ii) exist.

We will refer to elements of   $  \mathbb{P}  $  as  \emph{conditions}. 
We will call a set $D  \subseteq     \mathbb{P}   $   \emph{dense} if for each  $\sigma \in  \mathbb{P}    $  there exists  $ \tau  \in   D  $ 
such that $  \sigma  \subseteq    \tau $. 
Given a   family $  \Gamma $  of dense subsets of    $ \mathbb{P}    $, 
we  will say that  a  nonempty set  $  G  \subseteq   \mathbb{P}   $ is a   \emph{$  \Gamma $-generic ideal}  if: 
\begin{enumerate}
\item    $G$ is downward closed under $  \subseteq    $,  that is, 
$  \left(  \forall  \tau   \in G  \right)   \left(   \forall  \sigma  \in  \mathbb{P} \right)  \left[  
 \sigma  \subseteq   \tau    \rightarrow    \sigma  \in G    \,   \right]  
$.  
\item $G$ is upwards directed,  that is,   for all  $  \sigma_0,  \sigma_1  \in G $,   there exists  $  \tau \in G $ such that 
$  \sigma_0   \subseteq    \tau $  and  $  \sigma_1 \subseteq    \tau $; 

\item  $  D  \cap  G \neq  \emptyset $  for   all $  D  \in \Gamma $. 
\end{enumerate}
If $  \Gamma$ is countable, 
then we can  construct  a $  \Gamma$-generic ideal  $ G $  by recursion.

We   complete the construction of  the  bijection     $  (  \cdot )^{  \star }   :  V^{  \star }     \to        \mathcal{X}   $ by defining 
 a countable  family  $  \mathcal{D} $ of dense subsets of  $  \mathbb{P} $  that correspond to properties  we need the map  to have, 
 and  then set   $ (  \cdot )^{  \star }  :=   \cup G $  where  $G$ is any  $  \mathcal{D} $-generic  ideal. 
 The family  $  \mathcal{D} $  will be a union  $   \mathcal{D}   =   \mathcal{T}  \cup   \mathcal{S}  \cup  \mathcal{G} $  where: 
 \begin{enumerate}
 
\item   $  \mathcal{T}  $   consist of dense subsets of $  \mathbb{P} $ that will ensure that 
$ (  \cdot )^{  \star }    :  V^{  \star }  \to  \mathcal{X} $ is a total function.

 \item   $  \mathcal{S}  $   consist of dense subsets of $  \mathbb{P} $ that will ensure that 
 $ (  \cdot )^{  \star }    :  V^{  \star }  \to  \mathcal{X} $ is surjective and hence a bijection.

\item  $  \mathcal{G}    $    consist of dense subsets of $  \mathbb{P} $ that will ensure that    for all   $k, \ell   \in   \mathbb{Z} $ we can use a  back-and-forth argument  to construct an  automorphism 
 $  F^k_l  :    \mathcal{V}^{  \star }  \to   \mathcal{V}^{ \star }   $  
where    $   F^k_l   \left(   \mathsf{c}_k    \right)    =    \mathsf{c}_ {  \ell }      \,  $. 
 \end{enumerate}

  We start by defining  the family  $  \mathcal{T}  $ whose purpose is to ensure that 
 $  (  \cdot )^{  \star }   :  V^{  \star }     \to        \mathcal{X}   $
 will be a total function: 
 \[
  \mathcal{T}    :=  \Big\{   T_k   :   \    k \in   \mathbb{Z}     \  \mbox{ and  }   
  T_k :=  \big\{  \sigma   \in \mathbb{P}  :   \    \sigma  \left(    \mathsf{c}_k  \right)    \mbox{ is defined } \big\}
  \    \Big\} 
  \        .
 \]
Each   $  T_k    $  is dense:  given  $  \sigma  \in \mathbb{P} $, 
either  $  \sigma  \in T_k  $  or we can extend $  \sigma  $  to   $  \tau  \in  T_k  $  by 
choosing  $ w   \in   V^{  \star }   \setminus   \cup  \sigma  \left[  \mathsf{dom} \left(   \sigma  \right)  \right]  $
and   setting      
\[
  \tau  \left(    \mathsf{c}_k  \right) :=  
   \lbrace   \mathsf{c}_i  :   \   i  \in  \omega   \,   \rbrace  \cup     \lbrace   w  ,    \mathsf{c}_k     \    \rbrace 
 \       .
 \]  
We can do this since   $  V^{  \star }   \setminus   \cup   \sigma  \left[  \mathsf{dom} \left(   \sigma  \right)  \right]    $   is infinite by how  $  \mathbb{P} $  is defined. 
We need to check that  $  \tau  \in  \mathbb{P} $. 
Clearly,  $  \tau $  is one-to-one since  $  \sigma $ is one-to-one and   $    \lbrace   \mathsf{c}_i  :   \   i  \in  \omega   \,   \rbrace  \cup     \lbrace   w  ,    \mathsf{c}_k     \    \rbrace   $  is not in the image of $  \sigma $  by how $w $  is chosen. 
We also have  $  u \in  \tau (u ) $  for all  $  u \in \mathsf{dom} \left(  \tau \right) $  by how  $  \tau $  is defined and the fact that $  \sigma  \in  \mathbb{P} $. 
Finally, since any two sets in $  \mathcal{X} $  differ by finitely many elements, 
there exist two finite sets  $  X , Y  \subseteq   V^{  \star }  $  such that  
\[
\cap   \tau   \left[   \mathsf{dom} \left(  \tau \right)   \right]   =    
  \cap   \sigma   \left[   \mathsf{dom} \left(  \sigma \right)   \right]   \setminus    X  
\       \     \mbox{  and   }      \     \  
\cup   \tau   \left[   \mathsf{dom} \left(  \tau \right)   \right]    =    
\cup   \sigma   \left[   \mathsf{dom} \left(  \sigma \right)   \right]   \cup   Y    
\      .
\]
Since  $  \sigma   \in  \mathbb{P} $ and 
 $   \mathsf{dom} \left(  \tau \right)   =   \mathsf{dom} \left(  \sigma \right)    \cup  \lbrace   \mathsf{c}_k  \rbrace   $, 
it follows that  
$  \cap   \tau   \left[   \mathsf{dom} \left(  \tau \right)   \right]   \setminus   \mathsf{dom} \left(  \tau \right)   $  
and  $  V^{  \star }  \setminus  \cup   \tau   \left[   \mathsf{dom} \left(  \tau \right)   \right]    $  are both infinite. 
This completes the verification that $  \tau  \in  \mathbb{P} $.

Next, we define the family    $  \mathcal{S}  $ whose purpose is to ensure that 
 $  (  \cdot )^{  \star }   $
 will be a surjective function: 
  \[
  \mathcal{S}    :=  \Big\{   S_X   :   \   X \in   \mathcal{X}      \  \mbox{ and  }   
  S_X :=  \big\{  \sigma   \in \mathbb{P}  :   \   
   \left(  \exists  k  \in  \mathbb{Z}  \right)  \left[       \sigma  \left(    \mathsf{c}_k  \right)     =  X   \right]      \    \big\}
  \    \Big\} 
  \        .
 \]
Each       $  S_X   $ is dense: 
 given  $  \sigma  \in \mathbb{P} $, 
either  $  \sigma  \in   S_X  $  or we can extend $  \sigma  $  to   $  \tau  \in  S_X  $  by  
choosing 
\[
x  \in  X   \cap     \left(    \cap    \sigma  \left[  \mathsf{dom} \left(   \sigma  \right)  \right]     \setminus     \mathsf{dom} \left(   \sigma  \right)    \right)  
\]
and setting      $  \tau  \left(    x  \right) :=  X   $.
We  can do this since   $   \cap    \sigma  \left[  \mathsf{dom} \left(   \sigma  \right)  \right]     \setminus     \mathsf{dom} \left(   \sigma  \right)   $   is infinite,  by how  $  \mathbb{P} $  is defined, 
and  any two sets  in  $   \mathcal{X} $  differ by finitely many elements. 
Clearly,  $  \tau $  is one-to-one and  $  u  \in  \tau (u)  $  for all  $  u   \in   \mathsf{dom} \left(   \tau  \right)    $  
since  $  x  \in  X  $  and  $  \sigma   \in  \mathbb{P} $. 
Since  any two sets in $  \mathcal{X} $  differ by finitely many elements,  
$  \cap   \tau   \left[   \mathsf{dom} \left(  \tau \right)   \right]   \setminus   \mathsf{dom} \left(  \tau \right)   $  
and  $  V^{  \star }  \setminus  \cup   \tau   \left[   \mathsf{dom} \left(  \tau \right)   \right]    $  are both infinite
since  $  \sigma  \in  \mathbb{P} $.

We thus have the following lemma.

\begin{lemma}  \label{SecondBasicLemma}
$   \mathcal{T}  $  and   $   \mathcal{S}   $  are  countable families of dense subsets of  $  \mathbb{P} $. 
Furthermore, if  $   \mathcal{D}   $  is a family of dense subsets of $  \mathbb{P} $ with  $ \mathcal{T}  \cup   \mathcal{S}    \subseteq  \mathcal{D} $, 
and  $G$ is a  $   \mathcal{D}   $-generic ideal, 
then $  \cup G   :  V^{  \star }     \to        \mathcal{X}   $ is a bijection.    
\end{lemma}

We proceed to define   $  \mathcal{G} $. 
   Recall that  the purpose of $   \mathcal{G}   $   is to ensure that  for  $k, \ell   \in   \mathbb{Z} $, 
   we can use a  back-and-forth argument  to construct 
an  automorphism  $  F   :    \mathcal{V}^{  \star }  \to   \mathcal{V}^{ \star }   $  
such that   $   F    \left(   \mathsf{c}_k    \right)    =    \mathsf{c}_ {  \ell }      \,  $. 
 This is the motivation for the following definitions.

 \begin{definition}
  Let $ x_0,  x_1,  \ldots , x_m$ be elements of  $   V^{  \star }  $. 
 Let  $  \sigma   \in   \mathbb{P} $. 
 We define the set  $  \Delta \left(   \sigma  \, ;  \,   x_0,  x_1,  \ldots , x_n  \right)    $ to be the smallest set  $  W   \subseteq V^{  \star }    $  such that: 
 \begin{enumerate}
\item       $   \lbrace x_0,  x_1,  \ldots , x_n  \rbrace    \subseteq  W  $
 
 \item  $   \sigma \left( u  \right)   \setminus   \sigma \left( v  \right)       \subseteq   W    $  
     for all    $ u, v   \in  W  \cap   \mathsf{dom} \left(  \sigma  \right)  $

 \item   If  $  v  \in   \mathsf{dom} \left(   \sigma \right)  $  is such that  
 $  \sigma (v)  =  \left(   \sigma (u)  \setminus  A  \right)  \cup B  $  for some  $ u \in  W  $  and finite sets $  A,  B  \subseteq   W  $,  
 then  $  v  \in  W  $. 
 \end{enumerate}
 We will say that  the sequence     $ x_0,  x_1,  \ldots , x_m$    is  $  \mathsf{Good}_1  \left(  \sigma  \right)  $  if: 
 \begin{enumerate}
\item  $  \sigma $  is defined on all of    $   \Delta \left(   \sigma  \, ;  \,   \vec{x}  \right)    $.

\item     For all   $  u  \in    \Delta \left(   \sigma  \, ;  \,   \vec{x}  \right)  $  and all  finite sets 
 $  A, B  \subseteq   \Delta \left(   \sigma  \, ;  \,   \vec{x}  \right)    $, 
  there exists  $  v \in    \Delta \left(   \sigma  \, ;  \,   \vec{x}  \right)    $  such that  
$  \sigma (v)  =  \left(   \sigma (u )  \setminus  A   \right)   \cup   B  $. 
 \end{enumerate}
 \end{definition}

 \begin{definition}
Let     $  \sigma    \in   \mathbb{P} $. 
The  binary  relation  $  \in^{   \sigma  }  $  on  $   V^{  \star }   $   is defined  as follows:  for  $  i ,  j  \in  \mathbb{Z} $ 
 \[
    \mathsf{c}_i   \in^{  \sigma  }   \mathsf{c}_j  
    \     \     \Leftrightarrow_{ \mathsf{def}  }     \     \  
   \mathsf{c}_j   \in   \mathsf{dom} \left(   \sigma   \right)   \;  \wedge   \;    \mathsf{c}_i   \in      \sigma  \left(  \mathsf{c}_j   \right)        
     \       .
     \]
 \end{definition}

\begin{definition}
Let     $   \sigma    \in   \mathbb{P} $. 
We  will  say that  the (possibly empty)  sequence   $  \left( x_0, y_0  \right)   $,   $\left( x_1, y_1  \right) $,     $  \ldots    $,    $\left( x_m, y_m  \right) $  of pairs of elements of $  V^{  \star }   $    is   $ \mathsf{Good}_2  \left(   \sigma    \right)   $ if: 
\begin{enumerate}
\item  $ x_0, x_1 ,  \ldots , x_m $  is   $ \mathsf{Good}_1  \left(   \sigma    \right)   $, 
and   $ y_0, y_1 ,  \ldots , y_m $  is   $ \mathsf{Good}_1  \left(   \sigma    \right)   $.

\item  There exists an isomorphism    
     \[
f :       \left(  \Delta  \left(  \sigma  \,  ;  \,     \vec{x}  \right)    \,  ,   \,  \in^{   \sigma   }    \right)       \to   
      \left(   \Delta      \left(   \sigma     \,  ;  \,    \vec{y}     \right)   \,  ,   \,  \in^{   \sigma   }    \right)  
      \]
  such that        $  f (  x_i ) =  y_i  $ for  all   $ i  \in  \lbrace 0, 1,  \ldots , m  \rbrace $. 
\end{enumerate}
\end{definition}

We will abuse notation and also write  $  \mathsf{Good}_2  \left(   \sigma  \right)  $  for the set  
\[
\Big\{  
 s   \in  \left(  V^{ \star  }   \times   V^{ \star  }    \right)^{  <  \omega  }    :    \    s   \mbox{  is   }     \mathsf{Good}_2  \left(   \sigma  \right)  
   \,  
 \Big\}
\     .
\]
Similarly,  we will also  write     $  \mathsf{Good}_1  \left(   \sigma  \right)  $ for the extension of  
  $  \mathsf{Good}_1  \left(   \sigma  \right)   \,   $.

Before we define  $  \mathcal{G} $, we prove  some properties of   $  \mathsf{Good}_1 $   and   $  \mathsf{Good}_2 $ that will be used in the proof of  Theorem  \ref{maintheorem}.

 \begin{lemma}    \label{ZerothLemmaGood}
  Let  $  \sigma  \in   \mathbb{P} $.
Let   $  y  \in V^{  \star }   $. 
There  exists     $  \tau  \in  \mathbb{P}  $  such that 
$  y  \in  \mathsf{dom} \left(   \tau   \right)   $,  
  $  \sigma  \subseteq    \tau  $   and any finite sequence  in  $  \mathsf{dom} \left(  \tau  \right)  $  is    $   \mathsf{Good}_1 \left(   \tau   \right)     \,    $.
 \end{lemma}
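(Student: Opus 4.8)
The plan is to reduce the lemma to a single closure property of the domain, and then to build $\tau$ by a bookkeeping recursion inside $\mathbb{P}$. Call a condition $\rho\in\mathbb{P}$ \emph{closed} if \textup{(C1)} $\rho(u)\setminus\rho(v)\subseteq\mathsf{dom}(\rho)$ for all $u,v\in\mathsf{dom}(\rho)$, and \textup{(C2)} for every $u\in\mathsf{dom}(\rho)$ and all finite $A,B\subseteq\mathsf{dom}(\rho)$ there is $v\in\mathsf{dom}(\rho)$ with $\rho(v)=(\rho(u)\setminus A)\cup B$. I would first show that if $\rho$ is closed then every finite sequence $\vec x$ from $\mathsf{dom}(\rho)$ is $\mathsf{Good}_1(\rho)$. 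Indeed, \textup{(C1)} says that $\mathsf{dom}(\rho)$ itself is a set satisfying clauses (1)--(3) in the definition of $\Delta(\rho;\vec x)$ (clause (3) being automatic, as it only ever demands membership of elements already lying in $\mathsf{dom}(\rho)$), so by minimality $\Delta(\rho;\vec x)\subseteq\mathsf{dom}(\rho)$, which is exactly clause (1) of $\mathsf{Good}_1$. For clause (2) of $\mathsf{Good}_1$, given $u\in\Delta(\rho;\vec x)$ and finite $A,B\subseteq\Delta(\rho;\vec x)$, property \textup{(C2)} produces $v\in\mathsf{dom}(\rho)$ with $\rho(v)=(\rho(u)\setminus A)\cup B$, and then clause (3) in the definition of $\Delta$ forces $v\in\Delta(\rho;\vec x)$. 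Thus it suffices to find a closed $\tau\supseteq\sigma$ in $\mathbb{P}$ with $y\in\mathsf{dom}(\tau)$.

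Next I would build $\tau$ as the union of a chain $\sigma=\tau_0\subseteq\tau_1\subseteq\cdots$ in $\mathbb{P}$. The crucial finiteness observation is that, since all members of $\mathcal{X}$ differ from $\{\mathsf{c}_k:k\in\omega\}$ by finitely many elements, each difference $\tau_n(u)\setminus\tau_n(v)$ is \emph{finite}; hence closing under \textup{(C1)} only ever requires adjoining finitely many new points to the domain. I would enumerate all tasks --- ``put $y$ in the domain'', ``put the finite set $\tau(u)\setminus\tau(v)$ in the domain'' for each pair $(u,v)$, and ``realise the value $(\tau(u)\setminus A)\cup B$'' for each $u$ and finite $A,B$ --- and, since the domain grows and thereby spawns further tasks, dovetail them so that every task, including those created later, is eventually met. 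As each step enlarges $\mathsf{dom}(\tau_n)\setminus\mathsf{dom}(\sigma)$ by only finitely many elements, at every finite stage the domain exceeds $\mathsf{dom}(\sigma)$ by a merely finite set.

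To keep every $\tau_n$ in $\mathbb{P}$ I would fix at the outset, using that $\sigma\in\mathbb{P}$, an infinite ``always in'' set $P\subseteq\cap\sigma[\mathsf{dom}(\sigma)]\setminus\mathsf{dom}(\sigma)$ and an infinite ``always out'' set $Q\subseteq V^{\star}\setminus\cup\sigma[\mathsf{dom}(\sigma)]$, both chosen to avoid $y$, together with the infinite reservoir $C:=(\cap\sigma[\mathsf{dom}(\sigma)]\setminus\mathsf{dom}(\sigma))\setminus P$. I would maintain the invariants that every image contains $P$ and misses $Q$, that no element of $P\cup Q$ ever enters the domain, and that every image misses only finitely many points of $C$. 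A \textup{(C1)}-task forces points $w\in\tau(u)\setminus\tau(v)$, which automatically lie outside $P$ (as $\tau(v)\supseteq P$) and outside $Q$ (as $\tau(u)\cap Q=\emptyset$); to such $w$ I assign values of the form $(\{\mathsf{c}_k:k\in\omega\}\cup\{w\})$ minus a finite subset of $C$, choosing these removed subsets distinctly to preserve injectivity while keeping the value cofinite in $C$, containing $P$, and missing $Q$. A value $X=(\tau(u)\setminus A)\cup B$ demanded by a \textup{(C2)}-task is already cofinite in $C$, contains $P$, and misses $Q$, and a fresh realiser $v$ may be taken inside $X\cap C$ because $C$ is infinite while only finitely many of its points have so far entered the domain. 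These invariants survive the limit and yield clauses (1a) and (1b) for $\tau$, while injectivity is inherited from the chain.

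The main obstacle is the simultaneous maintenance of clause (1b) and of injectivity throughout the recursion: because every element of $\mathcal{X}$ is forced to be cofinite in $\{\mathsf{c}_k:k\in\omega\}$, there is very little room, and I must check that reserving $P$, $Q$ and $C$ once and for all really leaves enough freedom to assign pairwise distinct, (1a)-respecting values at every step without ever exhausting the fresh points that later \textup{(C2)}-tasks will need. The second delicate point is the dovetailing itself, which must guarantee that every cascading task --- including those generated by elements added only at later stages --- is eventually handled, so that the limit $\tau$ is genuinely closed and hence, by the reduction above, has the property that every finite sequence in $\mathsf{dom}(\tau)$ is $\mathsf{Good}_1(\tau)$.
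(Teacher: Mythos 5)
Your proposal is correct and is essentially the paper's own argument: the paper likewise builds $\tau$ as the union of a chain $\tau_0 \subseteq \tau_1 \subseteq \cdots$ whose limit satisfies exactly your closure conditions \textup{(C1)}--\textup{(C2)} (its clauses (6)--(7)), with the same implicit reduction that such closure of the domain makes every finite sequence $\mathsf{Good}_1$, and it likewise reserves infinite sets $C \subseteq \cap\, \sigma\left[ \mathsf{dom}(\sigma) \right] \setminus \mathsf{dom}(\sigma)$ and $D \subseteq V^{\star} \setminus \cup\, \sigma\left[ \mathsf{dom}(\sigma) \right]$, each with infinite complement, to keep the limit in $\mathbb{P}$. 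The differences are only bookkeeping: the paper partitions $C$ and $D$ into infinitely many infinite pieces $C_i, D_i$ and discharges all tasks spawned by $\tau_n$ in the single step $\tau_{n+1}$, securing injectivity of the new \textup{(C1)}-values by adjoining fresh markers $z_x \in D_{n+1}$ to an existing value rather than by your deletion of distinct finite subsets of $C$ from near-copies of $\lbrace \mathsf{c}_k : k \in \omega \rbrace$, while your one-task-at-a-time dovetailing under the invariants on $P$, $Q$, $C$ achieves the same end, and the worries you flag are routine to discharge (e.g.\ take $P \subseteq \lbrace \mathsf{c}_k : k \in \omega \rbrace$ and $Q$ disjoint from $\lbrace \mathsf{c}_k : k \in \omega \rbrace$, which is possible since $\cap\, \sigma\left[ \mathsf{dom}(\sigma) \right]$ exceeds and $V^{\star} \setminus \cup\, \sigma\left[ \mathsf{dom}(\sigma) \right]$ meets $\lbrace \mathsf{c}_k : k \in \omega \rbrace$ in only finitely many points, so that your prescribed values do contain $P$ and miss $Q$).
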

  \begin{proof}

  By how $  \mathbb{P} $ is defined,  
  $   \cap    \sigma  \left[   \mathsf{dom} \left(  \sigma  \right)    \right]     \setminus \mathsf{dom} \left(  \sigma  \right)     $
 and    $ V^{  \star }  \setminus   \cup    \sigma  \left[   \mathsf{dom} \left(  \sigma  \right)    \right]    $
 are both infinite. 
 There thus exist  two infinite sets   $  C,   D   \subseteq  V^{  \star }  $  such that  
 \begin{enumerate}
\item   $  C  \subseteq    \cap    \sigma  \left[   \mathsf{dom} \left(  \sigma  \right)    \right]    \setminus \mathsf{dom} \left(  \sigma  \right)       $  
and  
$   \cap    \sigma  \left[   \mathsf{dom} \left(  \sigma  \right)    \right]      \setminus   \left(   C  \cup  \mathsf{dom} \left(  \sigma  \right)    \right)     $ 
is infinite.

 \item   $  D  \subseteq    V^{  \star }  \setminus   \cup    \sigma  \left[   \mathsf{dom} \left(  \sigma  \right)    \right]       $  
and  $   V^{  \star }  \setminus   \left(  D  \cup   \cup    \sigma  \left[   \mathsf{dom} \left(  \sigma  \right)    \right]     \right)   $ 
is infinite. 
 \end{enumerate}
 We assume also that 
 \[
 y  \in   \mathsf{dom} \left(  \sigma  \right)   \cup  C   \cup   D
 \     .
 \]
Fix  partitions of  $  C$  and  $  D  $  into infinitely many disjoint  infinite sets 
\[
C  =   \bigcup_{  i  \in  \omega  }   C_i   
\     \       \mbox{   and    }      \      \    
D  =   \bigcup_{  i  \in  \omega  }   D_i   
\       .
\] 
 We assume
 \[
 y  \in   \mathsf{dom} \left(  \sigma  \right)   \cup  C_0   \cup   D_0
 \     .
 \]

Let  $  E  \subseteq  V^{  \star }  $  denote the set of all $  x  \in   V^{  \star }  $  such that  
$  x  \in  \sigma (u)  \setminus  \sigma (v)  $  for some  $  u,v    \in       \mathsf{dom} \left(  \sigma  \right)    $. 
We clearly have 
\[
E  \cap     \left(    \cap    \sigma  \left[   \mathsf{dom} \left(  \sigma  \right)    \right]    \right)     =  \emptyset  
\     \     \mbox{  and   }     \      \   
E  \subseteq        \cup    \sigma  \left[   \mathsf{dom} \left(  \sigma  \right)    \right]  
\           .
\]

We construct an increasing sequence   $   \langle   \tau_n  :   \   n  \in  \omega   \,  \rangle  $  of partial one-to-one maps  
$  V^{  \star }  \to   \mathcal{X} $  such that: 
\begin{enumerate}
\item   $  \mathsf{dom} \left(  \sigma   \right)   \cup    \lbrace y  \rbrace  =     \mathsf{dom} \left(  \tau_0   \right)   $  
and  $  \tau_0 $  is an extension of  $  \sigma  $.

\item    $    \mathsf{dom} \left(  \tau_{n}    \right)     \subseteq   
 \mathsf{dom} \left(  \sigma    \right)    \cup  E   \cup   \bigcup_{ i = 0 }^{ n }   C_i  \cup  D_i  $  
 for all $  n  \in  \omega        \,    $.

\item   $  u  \in  \tau_n (u)  $  for all $  n   \in  \omega $  and all $  u  \in   \mathsf{dom} \left(  \tau_n  \right)      \,     $.

\item  $  \cup    \tau_n  \left[   \mathsf{dom} \left(  \tau_n  \right)   \right]  
 \subseteq   
 \cup    \sigma  \left[   \mathsf{dom} \left(  \sigma  \right)   \right]  
 \cup   \bigcup_{ i \leq   n  }    D_i   
 $
 for all  $  n  \in  \omega      \,    $.

 \item  $  \cap    \tau_n  \left[   \mathsf{dom} \left(  \tau_n  \right)   \right]  
  \supseteq   
 \cap    \sigma  \left[   \mathsf{dom} \left(  \sigma  \right)   \right]  
 \setminus     \bigcup_{ i \leq   n  }    C_i     
 $
 for all  $  n  \in  \omega      \,    $.

\item   $   \tau_n (u)   \setminus  \tau_n (v)    \subseteq    \mathsf{dom} \left(  \tau_{n+1}   \right)          $
for  all   $  n  \in  \omega  $   and    all $ u, v    \in   \mathsf{dom} \left(  \tau_n   \right)        \,    $.

\item For all $  n  \in  \omega  $,   all $ u  \in   \mathsf{dom} \left(  \tau_n   \right)  $  and all finite sets   $  A, B   \subseteq   \mathsf{dom} \left(  \tau_n   \right)   $, 
there exists  $  v  \in  \mathsf{dom} \left(  \tau_{n+1}   \right)    $  such that 
$  \tau_{ n+1}  \left(  v  \right)  =   \left(    \tau_n (u)   \setminus  A  \right)   \cup   B         \,     $.

\end{enumerate}
We can then set  $  \tau :=   \bigcup_{  n  \in   \omega  }   \tau_n $   and we will be done.

We proceed to construct the sequence    $   \langle   \tau_n  :   \   n  \in  \omega   \,  \rangle        \,     $.
We start by defining  $  \tau_0  $. 
 If  $  y  \in      \mathsf{dom} \left(  \sigma   \right)  $, let   $  \tau_0 :=  \sigma  $. 
 Assume  $  y  \not\in     \mathsf{dom} \left(  \sigma   \right)   $. 
 Pick  $  u_y   \in     \mathsf{dom} \left(  \sigma   \right) $  and     $  z_y  \in D_0 $  and define  
 $  \tau_0  :    \mathsf{dom} \left(  \sigma   \right)   \cup    \lbrace y  \rbrace    \to    \mathcal{X}  $  as follows:  
 \[
 \tau_0  ( x)   =  \begin{cases}
 \sigma (x)     &    \mbox{  if  }  x  \in     \mathsf{dom} \left(  \sigma   \right)  
 \\
 \sigma ( u_y )  \cup   \lbrace  z_y  ,  y   \rbrace         &   \mbox{  if  }   x =  y  
 \       .
 \end{cases}
 \]
 The map $  \tau_0 $  is one-to-one  since  $  \sigma $  is one-to-one  
 and  $   z_y   \not \in  \sigma (z) $  for all   $   z  \in     \mathsf{dom} \left(  \sigma   \right)          $
 by how  $  D  $  was chosen.

 Assume we have defined  $   \langle   \tau_j   :   \     j  \leq   n  \,  \rangle        \,     $.
 We define  $  \tau_{ n+1}   $. 
 Fix  $  w^{  \star }   \in   \mathsf{dom} \left(  \tau_n   \right)  $. 
 We extend  $  \tau_n  $  to  $  \tau_{ n+1}  $  as follows: 
 \begin{enumerate}
\item  For each  $  x   \in  V^{  \star }   \setminus     \mathsf{dom} \left(  \tau_n   \right)    $   such that  
$  x  \in  \tau_n (u)  \setminus  \tau_n (v)  $  for some  $  u, v  \in      \mathsf{dom} \left(  \tau_n   \right)    $, 
pick a unique  $  z_x  \in   D_{ n+1}    $  and set  
\[
\tau_{ n+1}  \left(   x  \right)   :=     \tau_n  \left(     w^{  \star }    \right)   \cup  \lbrace  z_x  ,  x   \rbrace   
\        .
\]

\item   For  all  $  X  \not\in   \tau_n  \left[      \mathsf{dom} \left(  \tau_n   \right)   \right]  $  such that  
$  X  =   \left(   \tau_n  (u)   \setminus  A   \right)  \cup  B  $  for some  $  u  \in     \mathsf{dom} \left(  \tau_n   \right)   $  and 
finite sets  $  A, B   \subseteq     \mathsf{dom} \left(  \tau_n   \right)   $, 
pick a unique  $  w_X  \in   C_{ n+1}    $  and set  
\[
\tau_{ n+1}  \left(   w_X  \right)   :=   X   =     \left(   \tau_n  (u)   \setminus  A   \right)  \cup  B 
\       .
\] 
 \end{enumerate}
 This completes the definition of  $  \tau_{ n+1}  $. 
 \end{proof}

\begin{lemma}  \label{FirstLemmaGood}
Let  $  \sigma ,  \tau   \in  \mathbb{P} $. 
If  $  \sigma   \subseteq   \tau  $,  then   
\[
     \mathsf{Good}_1  \left(   \sigma  \right)    \subseteq     \mathsf{Good}_1  \left(   \tau  \right)        
     \     \mbox{  and   }     \    
     \mathsf{Good}_2  \left(   \sigma  \right)    \subseteq     \mathsf{Good}_2  \left(   \tau  \right)     
     \               .
\]     
\end{lemma}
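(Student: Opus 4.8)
The plan is to reduce both inclusions to a single structural fact: if $\vec{x}$ is $\mathsf{Good}_1(\sigma)$ and $\sigma \subseteq \tau$, then the two closure sets coincide, $\Delta(\sigma;\vec{x}) = \Delta(\tau;\vec{x})$. Once this is established, everything else follows by unwinding definitions, since $\tau$ agrees with $\sigma$ on $\Delta(\sigma;\vec{x}) \subseteq \mathsf{dom}(\sigma)$.

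First I would prove the easy inclusion $\Delta(\sigma;\vec{x}) \subseteq \Delta(\tau;\vec{x})$, which holds for any $\sigma \subseteq \tau$ with no appeal to $\mathsf{Good}_1$. It suffices to check that $W' := \Delta(\tau;\vec{x})$ is closed under the two generating clauses defining $\Delta(\sigma;\vec{x})$, since the latter is by definition the smallest such set. For $u, v \in W' \cap \mathsf{dom}(\sigma) \subseteq W' \cap \mathsf{dom}(\tau)$ we have $\sigma(u)\setminus\sigma(v) = \tau(u)\setminus\tau(v) \subseteq W'$, and the preimage clause transfers verbatim because $\mathsf{dom}(\sigma) \subseteq \mathsf{dom}(\tau)$ and $\sigma = \tau$ on $\mathsf{dom}(\sigma)$.

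The main obstacle is the reverse inclusion $\Delta(\tau;\vec{x}) \subseteq \Delta(\sigma;\vec{x})$, and this is where $\mathsf{Good}_1(\sigma)$ is needed. Here I would show that $W := \Delta(\sigma;\vec{x})$ is already closed under the two clauses computed with $\tau$ in place of $\sigma$. By clause (1) of $\mathsf{Good}_1(\sigma)$ we have $W \subseteq \mathsf{dom}(\sigma)$, so $\tau$ and $\sigma$ agree on $W$ and the first clause is immediate. The delicate point is the preimage clause: a new element $v \in \mathsf{dom}(\tau) \setminus \mathsf{dom}(\sigma)$ could satisfy $\tau(v) = (\tau(u)\setminus A)\cup B$ for some $u \in W$ and finite $A, B \subseteq W$, and we must rule out that it forces a proper enlargement of $W$. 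This is exactly where clause (2) of $\mathsf{Good}_1(\sigma)$ enters: it supplies some $v' \in W$ with $\sigma(v') = (\sigma(u)\setminus A)\cup B = \tau(v)$; since $v' \in W \subseteq \mathsf{dom}(\sigma) \subseteq \mathsf{dom}(\tau)$ and $\tau(v') = \sigma(v') = \tau(v)$, injectivity of $\tau$ forces $v = v' \in W$. Thus $W$ is $\tau$-closed and contains $\vec{x}$, whence $\Delta(\tau;\vec{x}) \subseteq W$, completing the equality.

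Finally I would harvest the two conclusions. For $\mathsf{Good}_1$: if $\vec{x}$ is $\mathsf{Good}_1(\sigma)$ then $\Delta(\tau;\vec{x}) = \Delta(\sigma;\vec{x}) \subseteq \mathsf{dom}(\sigma) \subseteq \mathsf{dom}(\tau)$, so clause (1) holds for $\tau$, and clause (2) for $\tau$ is literally clause (2) for $\sigma$ because $\sigma = \tau$ on this common set; hence $\mathsf{Good}_1(\sigma) \subseteq \mathsf{Good}_1(\tau)$. For $\mathsf{Good}_2$: given a $\mathsf{Good}_2(\sigma)$ sequence, both $\vec{x}$ and $\vec{y}$ are $\mathsf{Good}_1(\tau)$ by the previous sentence, and the membership relations agree, $\in^{\tau}\! \upharpoonright \Delta(\tau;\vec{x}) = \in^{\sigma}\! \upharpoonright \Delta(\sigma;\vec{x})$ and likewise for $\vec{y}$, once more because $\Delta(\sigma;\vec{x}) \subseteq \mathsf{dom}(\sigma)$ and $\tau$ extends $\sigma$ (for $\mathsf{c}_j$ in such a $\Delta$-set, $\mathsf{c}_j \in \mathsf{dom}(\tau)$ and $\tau(\mathsf{c}_j) = \sigma(\mathsf{c}_j)$). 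Therefore the very same witnessing isomorphism that works for $\sigma$ also witnesses $\mathsf{Good}_2(\tau)$, giving $\mathsf{Good}_2(\sigma) \subseteq \mathsf{Good}_2(\tau)$.
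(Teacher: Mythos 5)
Your proposal is correct and follows essentially the same route as the paper: the paper's entire proof is the observation that if $\vec{x}$ is $\mathsf{Good}_1(\sigma)$ then $\Delta(\sigma;\vec{x}) = \Delta(\sigma';\vec{x})$ for every $\sigma' \in \mathbb{P}$ extending $\sigma$, which is exactly your key equality. Your write-up simply supplies the details (including the use of injectivity of $\tau$ for the hard inclusion) that the paper leaves implicit.
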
 
\begin{proof}

If   a sequence  $  \vec{x}  $   is    $  \mathsf{Good}_1  \left(   \sigma  \right)     $, 
then    $  \Delta  \left(  \sigma  \,  ;  \,     \vec{x}  \right)     =  \Delta  \left(  \sigma^{  \prime }   \,  ;  \,     \vec{x}  \right)             $
for all   $  \sigma^{  \prime }   \in   \mathbb{P} $  such that  $  \sigma  \subseteq  \sigma^{  \prime }       \,      $. 
\end{proof}

 \begin{lemma}    \label{SecondLemmaGood}
 Let  $  \sigma  \in  \mathbb{P} $. 
Let  $  x_0, y_0  \in  V^{  \star }  $. 
Assume the one-element sequence $  x_0$  is  $  \mathsf{Good}_1  \left(  \sigma \right) $
and   the one-element sequence $  y_0$  is  $  \mathsf{Good}_1  \left(  \sigma \right) $. 
Then,  $  (x_0,  y_0)  $  is     $  \mathsf{Good}_2  \left(  \sigma \right) $. 
 \end{lemma}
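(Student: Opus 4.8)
The plan is to prove that any two singleton sequences that are each $\mathsf{Good}_1(\sigma)$ automatically form a $\mathsf{Good}_2(\sigma)$ pair, by exhibiting the required isomorphism between $\Delta(\sigma;x_0)$ and $\Delta(\sigma;y_0)$ sending $x_0$ to $y_0$. The key observation I would exploit is that the structure of $\left(\Delta(\sigma;x_0),\in^{\sigma}\right)$ is rigidly determined by the closure conditions (1)--(3) defining $\Delta$ together with the $\mathsf{Good}_1$ hypothesis, and crucially that membership in $\mathcal{X}$ — finite symmetric differences from a fixed infinite ``reference'' set — forces every $\sigma(u)$ for $u$ ranging over $\Delta$ to be cofinite, so the relevant combinatorial data is entirely \emph{finitary}.

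\medskip

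First I would unwind what $\mathsf{Good}_1(\sigma)$ gives us on $W_x := \Delta(\sigma;x_0)$. By closure condition (2), for all $u,v \in W_x$ we have $\sigma(u)\setminus\sigma(v)\subseteq W_x$, and by the $\mathsf{Good}_1$ clause (2) the family $\{\sigma(u):u\in W_x\}$ is closed under the operations $(\,\cdot\,\setminus A)\cup B$ for finite $A,B\subseteq W_x$, with the witnessing $v$ lying in $W_x$. This means $\left(W_x,\in^{\sigma}\right)$ is, in the terminology of the paper, a model where the sets realized are closed under adjunction and subtraction \emph{internally}; in fact it is a model of $\mathsf{WD}+\mathsf{EXT}+\mathsf{BU}+\mathsf{BI}$ relativized to $W_x$, with $x_0\in^{\sigma}x_0$ and every element self-membered. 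I would record that each $u\in W_x$ satisfies $u\in\sigma(u)$, so reflexivity holds throughout, and that the whole of $W_x$ is countably infinite with every realized set cofinite in $W_x$.

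\medskip

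The heart of the argument is then a back-and-forth construction of the isomorphism $f\colon \left(W_x,\in^{\sigma}\right)\to\left(W_y,\in^{\sigma}\right)$ with $f(x_0)=y_0$, where $W_y:=\Delta(\sigma;y_0)$. I would enumerate $W_x=\{x_0,x_1,\dots\}$ and $W_y=\{y_0,y_1,\dots\}$ and build $f$ as the union of a chain of finite partial isomorphisms, starting from $\{(x_0,y_0)\}$. At each \textbf{forth} step, given a finite partial isomorphism $g$ defined on a finite $F\subseteq W_x$ and a new point $x_i$, I must find an image $z\in W_y$ agreeing with $g$ on all membership relations between $x_i$ and $F$, i.e.\ matching the finite ``type'' of $x_i$ over $F$; such a $z$ exists precisely because of the closure property (clause (2) of $\mathsf{Good}_1$), which lets me take the set $g[\,\cdot\,]$-image of $\sigma(x_i)$ corrected by the finite set $F$ and find its witness inside $W_y$, using that the symmetric differences are finite and the $\mathsf{Good}_1$ witnessing element lies in $W_y$. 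The \textbf{back} step is symmetric. The delicate point — and what I expect to be \textbf{the main obstacle} — is verifying that the candidate target genuinely lies in $W_y$ and that no collision occurs (injectivity together with correctness of both the ``$\in$'' and ``$\ni$'' directions of membership), which is exactly where one must lean on closure condition (2) of $\Delta$ (giving $\sigma(u)\setminus\sigma(v)\subseteq W_y$) and on the fact that all discrepancies between realized sets are finite. Once the forth-and-back construction closes, $f$ is a bijection $W_x\to W_y$ preserving $\in^{\sigma}$ in both directions with $f(x_0)=y_0$, which is precisely what $\mathsf{Good}_2(\sigma)$ for the pair $(x_0,y_0)$ requires.
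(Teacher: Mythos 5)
Your overall target — exhibiting an isomorphism between $\left(\Delta(\sigma;x_0),\in^{\sigma}\right)$ and $\left(\Delta(\sigma;y_0),\in^{\sigma}\right)$ sending $x_0$ to $y_0$ — is the right one, and your preliminary structural observations are essentially correct. But the core step of your argument has a genuine gap: the forth step of your back-and-forth can get stuck, because the extension property for \emph{arbitrary} finite partial isomorphisms is false in these structures. Write $A_u := \sigma(x_0)\setminus\sigma(u)$ for $u \in \Delta(\sigma;x_0)$ (and similarly relative to $y_0$). As in the Claim inside the paper's proof, one shows: every $u\in\Delta(\sigma;x_0)$ satisfies $\sigma(u)\subseteq\sigma(x_0)$; each $A_u$ is a \emph{finite} subset of $\Delta(\sigma;x_0)$; the map $u\mapsto A_u$ is a \emph{bijection} from $\Delta(\sigma;x_0)$ onto the finite subsets of $\Delta(\sigma;x_0)$ (injectivity of $\sigma$ plus clause (2) of $\mathsf{Good}_1$ with $B=\emptyset$); and for $s,t\in\Delta(\sigma;x_0)$ one has $s\in^{\sigma}t \Leftrightarrow s\notin A_t$. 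So $\left(\Delta(\sigma;x_0),\in^{\sigma}\right)$ is, after complementing the relation, a copy of $\left(V_{\omega},\in\right)$: a \emph{rigid} structure, not a homogeneous one. There is exactly one isomorphism onto $\left(\Delta(\sigma;y_0),\in^{\sigma}\right)$, so a finite partial isomorphism can be extended only if it happens to be a restriction of that unique map — and nothing guarantees that. Your claim that clause (2) of $\mathsf{Good}_1$ supplies the required image $z$ is exactly where this breaks: that clause lets you prescribe the members \emph{of} $\sigma(z)$ on a finite set, but it gives no control over which already-matched sets \emph{contain} $z$; those reverse-membership constraints can pin $z$ down to a single, wrong, element.

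Concretely: let $x_1,u\in\Delta(\sigma;x_0)$ be the elements with $A_{x_1}=\{x_0\}$ and $A_u=\{x_1\}$, and let $y_1,y_2,w\in\Delta(\sigma;y_0)$ be the elements with $A_{y_1}=\{y_0\}$, $A_{y_2}=\{y_1\}$, $A_w=\{y_2\}$; all of these exist by the bijection property above. Then $g=\{(x_0,y_0),(u,w)\}$ is a finite partial isomorphism: since $x_1\notin\{x_0,u\}$ and $y_2\notin\{y_0,w\}$, every $\in^{\sigma}$-relation among $\{x_0,u\}$ holds, and likewise among $\{y_0,w\}$, so all patterns match. Now try to extend $g$ to $x_1$. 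From $x_1\in A_u$ we get $x_1\not\in^{\sigma}u$, so the image $z$ must satisfy $z\not\in^{\sigma}w$, i.e.\ $z\in A_w=\{y_2\}$, forcing $z=y_2$. But $x_0\in A_{x_1}$ gives $x_0\not\in^{\sigma}x_1$, so we would also need $y_0\not\in^{\sigma}y_2$, i.e.\ $y_0\in A_{y_2}=\{y_1\}$, which is false. Hence $g$ admits no extension to $x_1$: the back-and-forth is stuck. This is precisely why the paper does not run a free back-and-forth. It builds the (unique) isomorphism along the canonical filtration $\Delta(\sigma,k;x_0)$, whose levels are closed under taking defect sets, so each new element $v$ of level $k+1$ has $A_v$ entirely inside the current domain and its image is forced: $f_{k+1}(v)=\sigma^{-1}\left(\sigma(y_0)\setminus f_k\left[A_v\right]\right)$. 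Your argument can be repaired by restricting throughout to partial isomorphisms $g$ that are hereditarily defect-closed, i.e.\ $g\left[A_v\right]=A_{g(v)}$ for every $v$ in the domain, adding the defect-closure of a new point rank by rank — but that is exactly the paper's level-by-level construction, not a back-and-forth on arbitrary finite partial isomorphisms.
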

\begin{proof}

Let  $  z  \in  \lbrace  x_0, y_0  \rbrace $. 
Since the one-element sequence $z $  is  $   \mathsf{Good}_1  \left(  \sigma  \right)  $, 
we have  $    \Delta \left(  \sigma   \,  ;   \,   z  \right)    \subseteq     \mathsf{dom} \left(  \sigma  \right)   $  
and for all $  u  \in    \Delta \left(  \sigma   \,  ;   \,   z  \right)    $  and  all finite  sets  $  A , B \subseteq   \Delta \left(  \sigma   \,  ;   \,   z  \right)   $, 
there exists  $  v  \in    \Delta \left(  \sigma   \,  ;   \,   z  \right)    $  such that  
$  \sigma (v)  =   \left(   \sigma (u)  \setminus   A   \right)  \cup   B      \,     $. 
We define the sets  $  \Delta \left(  \sigma , k  \,  ;   \,   z  \right)  $  by recursion on  $ k  \in  \omega   $: 
\begin{enumerate}
\item  $  \Delta \left(  \sigma , 0 \,  ;   \,   z  \right)   =  \lbrace   z  \rbrace $

\item   $  \Delta \left(  \sigma , k +1  \,  ;   \,   z  \right)   =   \Delta \left(  \sigma , k  \,  ;   \,   z  \right)   \cup  W_k  $ 
where  $  W_k $  consists of all $  v  \in  \mathsf{dom} \left(  \sigma  \right)  $  such that 
$  \sigma (v)  =    \sigma (z)  \setminus  A   $  for some  $  A  \subseteq     \Delta \left(  \sigma , k  \,  ;   \,   z \right)        \,    $.
\end{enumerate} 
We   have the following observation.

  \begin{quote} {\bf (Claim)} \;\;\;\;\;\;    
  Let   $   z  \in    \lbrace  x_0,  y_0  \rbrace   $. 
  The following holds for all  $  k  \in   \omega   $: 
  \begin{enumerate}
  \item    $    \Delta \left(  \sigma , k   \,  ;   \,   z  \right)     \subseteq   \sigma (z)       $
  and  $  \sigma (u)  \subseteq   \sigma (z)  $  for all  $  u \in       \Delta \left(  \sigma , k   \,  ;   \,   z  \right)     \,    $.

  \item   $  \sigma (s)  \setminus  \sigma (t)   \subseteq       \Delta \left(  \sigma , k   \,  ;   \,   z  \right)    $
  for all $  s, t  \in     \Delta \left(  \sigma , k   \,  ;   \,   z  \right)         \,   $.

  \item  For all $  u \in   \Delta \left(  \sigma , k   \,  ;   \,   z  \right)    $  and all $  A, B  \subseteq    \Delta \left(  \sigma , k   \,  ;   \,   z  \right)   $,   there exists  $  v  \in    \Delta \left(  \sigma , k+1   \,  ;   \,   z  \right)   $
  such that    $  \sigma (v)  =   \left(   \sigma (u)  \setminus   A   \right)  \cup   B      \,     $.

  \item    $  v   \in  \sigma (w)     $  for    all   $  w  \in    \Delta \left(  \sigma , k   \,  ;   \,   z  \right)    $  and  
all $  v  \in   \Delta \left(  \sigma , k +1  \,  ;   \,   z  \right)     \setminus    \Delta \left(  \sigma , k   \,  ;   \,   z  \right)    $.  
  \end{enumerate}
 \end{quote}

First,  we show that (1) holds for all $ k   \in  \omega   $. 
We prove this by induction on $k$. 
We consider the base case $  k =  0 $. 
This follows from the fact that  $    \Delta \left(  \sigma , 0  \,  ;   \,   z  \right)    =  \lbrace z  \rbrace  $  
and   $  z  \in    \sigma (z)        $  by how    $  \mathbb{P} $  is defined. 
We consider the inductive case   $  k >  0 $. 
Pick  $  u  \in    \Delta \left(  \sigma , k  \,  ;   \,   z  \right)   $. 
We need to show that  $  u  \in  \sigma (z)  $  and  $  \sigma (u)   \subseteq   \sigma (z)  $. 
We have two cases: 
(i)   $  u  \in     \Delta \left(  \sigma , k-1  \,  ;   \,   z  \right)    $; 
(ii) $  u  \not\in   \Delta \left(  \sigma , k-1  \,  ;   \,   z  \right)    $. 
Case  (i) is fine since we just use the induction hypothesis. 
We consider (ii). 
Since $  u  \not\in   \Delta \left(  \sigma , k-1  \,  ;   \,   z  \right)    $,   there exists   $  A  \subseteq    \Delta \left(  \sigma , k-1  \,  ;   \,   z  \right)    $  such that  
$  \sigma (u)  =  \sigma (z)  \setminus  A  $. 
This clearly shows that  $  \sigma (u)   \subseteq    \sigma (z)   $. 
It remains to show that  $  u  \in  \sigma (z)  $. 
But, by how  $  \mathbb{P} $  is defined,  $  u  \in   \sigma (u)   \subseteq   \sigma (z)  $. 
Thus, by induction,  (1)  holds for all $ k  \in  \omega  $.

We show that (2) holds   holds for all $ k   \in  \omega   $. 
We prove this by induction on $k$. 
The base case $k= 0 $  holds since   $   \Delta \left(  \sigma , 0  \,  ;   \,   z  \right)   $  is a singleton. 
We consider the inductive case  $  k >  0  $. 
By how  $   \Delta \left(  \sigma , k  \,  ;   \,   z  \right)   $  is defined,  there exists 
 $  v_0,  v_1,  \ldots ,  v_j  \in   \Delta \left(  \sigma , k-1  \,  ;   \,   z  \right)    $  
 and  $  A_0,  A_1,  \ldots , A_j  \subseteq    \Delta \left(  \sigma , k-1  \,  ;   \,   z  \right)  $  such that  
 \[
 \Delta \left(  \sigma , k  \,  ;   \,   z  \right)  =   \Delta \left(  \sigma , k-1  \,  ;   \,   z  \right)     \cup   
 \left\{
     \sigma^{ -1}   \left(      \sigma ( v_i )  \setminus   A_i   \right)     :    \     i  \in  \lbrace 0, 1,  \ldots  ,  j  \rbrace 
 \right\}
 \      .
 \]
Pick  $  s,  t   \in  \Delta \left(  \sigma , k  \,  ;   \,   z  \right) $.
We need to show that 
 $  \sigma (s)  \setminus  \sigma (t)   \subseteq       \Delta \left(  \sigma , k   \,  ;   \,   z  \right)    $.
 We have two cases:
  (i)  $  s  \in   \Delta \left(  \sigma , k-1  \,  ;   \,   z  \right) $; 
  (ii)  $  s  \not\in   \Delta \left(  \sigma , k-1  \,  ;   \,   z  \right) $. 
  We consider (i). 
  If  $  t  \in   \Delta \left(  \sigma , k-1  \,  ;   \,   z  \right)   $, 
  then   $  \sigma (s)  \setminus  \sigma (t)    \subseteq   \Delta \left(  \sigma , k-1  \,  ;   \,   z  \right)    \subseteq       \Delta \left(  \sigma , k   \,  ;   \,   z  \right)    $
  by the induction hypothesis. 
  Otherwise, $   \sigma (t)  =    \sigma ( v_i )  \setminus   A_i   $  for some  $  i  \in    \lbrace 0, 1,  \ldots  ,  j  \rbrace   $ 
  and hence  by the induction hypothesis 
  \[
    \sigma (s)  \setminus  \sigma (t)        \subseteq   
    \left(       \sigma (s)  \setminus \sigma ( v_i )       \right)    \cup  A_i 
    \subseteq   \Delta \left(  \sigma , k-1  \,  ;   \,   z  \right)    \subseteq       \Delta \left(  \sigma , k   \,  ;   \,   z  \right)  
    \      .
  \]
Finally,  we consider (ii). 
We have  $  \sigma (s)  =    \sigma ( v_{  \ell}   )  \setminus   A_{  \ell  }   $  for some  $  \ell  \in    \lbrace 0, 1,  \ldots  ,  j  \rbrace   $. 
If  $  t   \in    \Delta \left(  \sigma , k-1  \,  ;   \,   z  \right)   $,  then    by the induction hypothesis 
  \[
    \sigma (s)  \setminus  \sigma (t)        \subseteq        \sigma ( v_{  \ell}   )   \setminus \sigma ( t )    
    \subseteq   \Delta \left(  \sigma , k-1  \,  ;   \,   z  \right)    \subseteq       \Delta \left(  \sigma , k   \,  ;   \,   z  \right)  
    \      .
  \]
Otherwise,   $   \sigma (t)  =    \sigma ( v_i )  \setminus   A_i   $  for some  $  i  \in    \lbrace 0, 1,  \ldots  ,  j  \rbrace   $ 
  and hence  by the induction hypothesis 
  \[
    \sigma (s)  \setminus  \sigma (t)        \subseteq       
    \left(   \sigma ( v_{  \ell}   )   \setminus  \sigma ( v_i )      \right)    \cup   A_i 
    \subseteq   \Delta \left(  \sigma , k-1  \,  ;   \,   z  \right)    \subseteq       \Delta \left(  \sigma , k   \,  ;   \,   z  \right)  
    \      .
  \]  
Thus, by induction,  (2)  holds for all $ k  \in  \omega  $.

We show that (3) holds   holds for all $ k   \in  \omega   $. 
Pick     $  u  \in  \Delta \left(  \sigma , k   \,  ;   \,   z  \right)   $  and    $  A,  B   \subseteq  \Delta \left(  \sigma , k   \,  ;   \,   z  \right)   $. 
We need to show that  there exists   $  v  \in    \Delta \left(  \sigma , k +1   \,  ;   \,   z  \right)    $  such that  
$  \sigma (v)  =      \left(   \sigma (u)  \setminus   A   \right)  \cup   B  $. 
By how  $   \Delta \left(  \sigma , k +1   \,  ;   \,   z  \right)      $  is defined,  
  we need to show that 
there exists  $  C  \subseteq    \Delta \left(  \sigma , k   \,  ;   \,   z  \right)   $  such that 
\[
 \left(   \sigma (u)  \setminus   A   \right)  \cup   B    =    \sigma (z)   \setminus  C
 \       .
\]
We have  $ \sigma  (u)  \cup    A  \cup  B   \subseteq   \sigma (z)  $  by clause (1) of the claim.
By clause (2) of the claim,    there exists    $  D  \subseteq      \Delta \left(  \sigma , k   \,  ;   \,   z  \right)   $  such that 
$  \sigma  (u)  =   \sigma (z)  \setminus  D  $. 
Hence 
\begin{align*}
 \left(   \sigma (u)  \setminus   A   \right)  \cup   B    
 &=  
 \left(   \sigma (z)  \setminus   \left(  A  \cup   D  \right)     \right)  \cup   B  
 \\
 &=  \sigma (z)   \setminus  C
&    \mbox{  where  }   C  :=    \left(  A  \cup   D  \right)     \setminus    B  
 \        .
\end{align*}
Thus,   (3)  holds  for all $  k  \in  \omega  $.

Finally,    we   show that (4) holds  for all  $  k  \in  \omega  $.
Pick  $  w  \in    \Delta \left(  \sigma , k   \,  ;   \,   z  \right)    $   and     $  v  \in   \Delta \left(  \sigma , k +1  \,  ;   \,   z  \right)     \setminus    \Delta \left(  \sigma , k   \,  ;   \,   z  \right)    $. 
By clause (2) of the claim 
\[
v  \in   \cap   \sigma   \left[    \Delta \left(  \sigma , k   \,  ;   \,   z  \right)     \right]  
\      \      \mbox{  or   }      \         \   
v    \not\in   \cup   \sigma   \left[    \Delta \left(  \sigma , k   \,  ;   \,   z  \right)     \right]  
\      .
\]
By clause  (1) of the claim,  $  v  \in   \Delta \left(  \sigma , k +1  \,  ;   \,   z  \right)     \subseteq  \sigma (z)  $. 
Hence,  $  v  \in   \cap   \sigma   \left[    \Delta \left(  \sigma , k   \,  ;   \,   z  \right)     \right]    $
since  $  z  \in     \Delta \left(  \sigma , k   \,  ;   \,   z  \right)           \,     $. 
In particular,    $  v   \in  \sigma (w)        \,    $.

It follows clauses (2)-(3) of the  claim   and the closure properties that define    $      \Delta \left(  \sigma   \,  ;   \,   z  \right)   $  that 
\[
  \Delta \left(  \sigma   \,  ;   \,   z  \right)    =  \bigcup_{  k  \in  \omega  }      \Delta \left(  \sigma , k  \,  ;   \,   z  \right)   
  \       . 
\]

We show that    $  (x_0,  y_0)  $  is     $  \mathsf{Good}_2  \left(  \sigma \right) $.
It suffices to construct an increasing sequence  of isomorphisms 
\[
f_k  :    \left(   \Delta \left(  \sigma , k  \,  ;   \,   x_0   \right)    \,  ,  \,   \in^{  \sigma }   \right)  
   \to  
   \left(     \Delta \left(  \sigma , k  \,  ;   \,   y_0  \right)       \,  ,  \,   \in^{  \sigma }   \right)  
\     \    \mbox{  for  }  k  \in  \omega 
\     .
\]
By setting  $  f  :   \bigcup_{  k   \in   \omega }    f_k $,  we then get an isomorphism 
\[
f  :    \left(   \Delta \left(  \sigma   \,  ;   \,   x_0   \right)    \,  ,  \,   \in^{  \sigma }   \right)  
   \to  
   \left(     \Delta \left(  \sigma   \,  ;   \,   y_0  \right)       \,  ,  \,   \in^{  \sigma }   \right)  
\]
that witnesses that    $  (x_0,  y_0)  $  is     $  \mathsf{Good}_2  \left(  \sigma \right)  $.

We construct the sequence    $  \langle f_k  :   \   k  \in  \omega   \,  \rangle  $  by recursion. 
We have  $  f_0  \left(  x_0  \right)   = y_0 $  since 
 $    \Delta \left(  \sigma , 0  \,  ;   \,   z   \right)    =  \lbrace  z  \rbrace  $  for all $  z  \in  \lbrace x_0,  y_0  \rbrace  $. 
This map is an isomorphism since   $  x_0  \in  \sigma \left(  x_0  \right)   $  and    $  y_0  \in   \sigma \left(  y_0  \right)   $.
Assume we have defined  $  \langle f_j  :   \    j  \leq  k    \,  \rangle  $. 
We define  $  f_{ k+1}  $: 
\begin{enumerate}
\item  If   $   v  \in    \Delta \left(  \sigma , k  \,  ;   \,   x_0   \right)    $, then 
$  f_{ k+1}  \left( v  \right)    :=    f_k (v)        \,   $.

\item   If  $  v  \not\in     \Delta \left(  \sigma , k  \,  ;   \,   x_0   \right)     $  
and   hence  $  \sigma ( v )  =   \sigma  ( x_0  )   \setminus  A_v    $
where   $   A_v     \subseteq    \Delta \left(  \sigma , k  \,  ;   \,   x_0   \right)   $, 
then 
\[
 f_{ k+1}  \left( v  \right)     
 := 
 \sigma^{  -1}  \Big(      \sigma  ( y_0 )   \setminus  f_k \left[  A_v  \right]      \Big)  
\      .
\]
\end{enumerate}
First, we show that  $ f_{ k+1}  $  is one-to-one. 
Since  $  f_k$  and $  \sigma $  are both one-to-one,  
it suffices to show that  $  f_{ k+1}  (v)  \not\in    \Delta \left(  \sigma , k  \,  ;   \,   y_0   \right)   $  
for all $  v  \in   \Delta \left(  \sigma , k+1  \,  ;   \,   x_0   \right)   \setminus   \Delta \left(  \sigma , k  \,  ;   \,   x_0   \right)   $.
Assume for the sake of  a contradiction,  there exist
$  w  \in     \Delta \left(  \sigma , k  \,  ;   \,   x_0   \right)   $  and   
$  v  \in   \Delta \left(  \sigma , k+1  \,  ;   \,   x_0   \right)   \setminus   \Delta \left(  \sigma , k  \,  ;   \,   x_0   \right)   $  
such that    
$
f_k (w)  =    f_{ k+1}  \left( v  \right)   
$.
We have 
\[
\sigma \left(   f_k \left( w  \right)     \right)    = 
\sigma \left(   f_{ k+1}  \left( v  \right)     \right)  =      \sigma  ( y_0 )   \setminus  f_k \left[  A_v  \right]   
\       .
\]
Since  $  f_k $  is an isomorphism, we must also have 
\[
\sigma (w)  =     \sigma  ( x_0 )   \setminus   A_v 
\       .
\]
But since  $  \sigma (v)  =   \sigma  ( x_0  )   \setminus  A_v     $  and  $  \sigma  $  is one-to-one, 
we get  that  $  v  =   w  $,  which contradicts the assumption that  
$  v  \not\in     \Delta \left(  \sigma , k  \,  ;   \,   x_0   \right)   $.
Thus,   $  f_{ k+1}  $  is  one-to-one.

It remains to show that that  $f_{ k+1}  $  is an isomorphism. 
Pick    $  v, w  \in  \Delta \left(  \sigma , k+1  \,  ;   \,   x_0   \right)   $. 
We need to show that  $  v  \in   \sigma ( w)   $  if and only if  $   f_{ k+1} ( v )   \in     \sigma  \left(   f_{ k+1} ( w )    \right)     \,    $.
We have two cases: 
(I) $   w  \in  \Delta \left(  \sigma , k  \,  ;   \,   x_0   \right)    $; 
(II)   $   w    \not\in  \Delta \left(  \sigma , k  \,  ;   \,   x_0   \right)    $  and 
$  \sigma ( w )  =   \sigma  ( x_0  )   \setminus  A    $
where    $   A    \subseteq    \Delta \left(  \sigma , k  \,  ;   \,   x_0   \right)   $.
We consider case (I). 
If  $  v  \in  \Delta \left(  \sigma , k \,  ;   \,   x_0   \right)   $,  
then   $  v  \in   \sigma ( w)   $  if and only if  $   f_{ k+1} ( v )   \in     \sigma  \left(   f_{ k+1} ( w )    \right)        $
since  $ f_k $  is an isomorphism and  $ f_{ k+1} $  agrees with $  f_k  $  on   $  \Delta \left(  \sigma , k \,  ;   \,   x_0   \right)   $. 
Assume   $  v    \not\in  \Delta \left(  \sigma , k \,  ;   \,   x_0   \right)   $. 
Since  $  f_{ k+1}  $  is one-to-one,   we also have  $  f_{ k+1}  \left(  v  \right)     \not\in  \Delta \left(  \sigma , k \,  ;   \,   y_0   \right)   $. 
By clause (4) of the claim
\[
v  \in   \sigma ( w)     \     \mbox{  and  }        \   
 f_{ k+1} ( v )   \in    \sigma  \left(   f_{ k} ( w )    \right)     =  \sigma  \left(   f_{ k+1} ( w )    \right)      
 \            .
\]

We consider Case (II):   $  \sigma ( w )  =   \sigma  ( x_0  )   \setminus  A    $
and  $  \sigma   \left(   f_{ k+1}    (w )    \right)   =   \sigma  (\left(   y_0    \right)   \setminus   f_k  \left[ A  \right]      $ 
where    $   A    \subseteq    \Delta \left(  \sigma , k  \,  ;   \,   x_0   \right)   $.
We have two subcases: 
(IIa)  $  v  \in    \Delta \left(  \sigma , k \,  ;   \,   x_0   \right)    $; 
(IIb)  $  v  \not\in   \Delta \left(  \sigma , k \,  ;   \,   x_0   \right)       \,    $. 
We consider (IIa). 
Since  $f_k $  is an isomorphism and  $  f_{ k+1}  $  agrees with  $  f_k  $  on   $   \Delta \left(  \sigma , k  \,  ;   \,   x_0   \right)   $
\begin{align*}
 v  \in   \sigma ( w)       \   
 &  \Leftrightarrow     \  
 v   \in  \sigma ( x_0 )     \;   \wedge    \;  v  \not\in  A  
 \\
  &  \Leftrightarrow     \  
 f_k  (v)    \in  \sigma  \left(   y_0  \right)     \;   \wedge    \;    f_k  (v)     \not\in  f_k  \left[  A  \right] 
   \\
  &  \Leftrightarrow     \  
 f_{k+1}  (v)    \in  \sigma  \left(   y_0  \right)     \;   \wedge    \;    f_k  (v)     \not\in  f_k  \left[  A  \right] 
       \\
  &  \Leftrightarrow     \  
   f_{k+1}  (v)     \in    \sigma   \left(    f_{k+1}  (w)    \right)  
   \        .
\end{align*}

Finally,  we consider (IIb). 
Since   $  v  \not\in   \Delta \left(  \sigma , k \,  ;   \,   x_0   \right)    $,  
we also have     $  f_{ k+1}   \left( v  \right)    \not\in   \Delta \left(  \sigma , k \,  ;   \,   y_0   \right)    $. 
By clause (4)  of the claim  
\[
v  \in  \cap   \sigma  \left[       \Delta \left(  \sigma , k \,  ;   \,   x_0   \right)     \right]   
\    \     \mbox{  and   }     \      \  
f_{ k+1}  \left( v  \right)   \in  \cap   \sigma  \left[       \Delta \left(  \sigma , k \,  ;   \,   y_0   \right)     \right]   
\     .
\] 
Hence,  since   $  A  \subseteq      \Delta \left(  \sigma , k \,  ;   \,   x_0   \right)    $
\[
v  \in     \sigma  ( x_0  )   \setminus  A   =   \sigma ( w )  
\     \    \mbox{  and   }    \     \
f_{ k+1}  \left( v  \right)   \in        \sigma  (\left(   y_0      \right)   \setminus   f_k  \left[ A  \right]        =   \sigma   \left(   f_{ k+1}    (w )    \right)  
\      .
\]
This completes the proof that  $  f_{ k+1}  $  is an isomorphism.
\end{proof}

 We are ready to complete the definition of  $  \mathcal{D} $. 
 Recall that  $   \mathcal{D}   :=   \mathcal{T}  \cup   \mathcal{S}  \cup  \mathcal{G}   $ and it remains to define  $  \mathcal{G}  $. 
 For each    sequence   $  (a_0,  b_0 ) ,  \ldots , (a_m ,  b_m ) $ of pairs of elements  of    $  V^{  \star }   $
 and each $    a_{ m+1}   \in  V^{  \star } $, 
 let 
 \[
 \mathcal{G}   \left(    \,      (a_0,  b_0 ) ,  \ldots , (a_m , b_m ) \,  ;   \,   a_{ m+1}      \,    \right) 
\]  
consist  of all conditions  $  \sigma  \in  \mathbb{P} $ for which  one of the following holds: 
   \begin{enumerate}
   
\item   There exists     $  b_{  m+1 }   \in   V^{  \star }  $ such that   
$  (a_0,  b_0 ) $,  $\ldots $,   $(a_m , b_m )  $,      $ (a_{ m+1 } ,  b_{ m+1 } ) $  is  $ \mathsf{Good}_2 \left(   \sigma  \right)   $.

\item There is no   $  \mathbb{P}   \ni  \tau \supseteq  \sigma $  such that 
  $  (a_0,  b_0 )   $,   $\ldots   $,  $ (a_m , b_m ) $  is  $ \mathsf{Good}_2  \left(   \tau   \right)   $.
 \end{enumerate} 
 Let    $ \mathcal{G}  $ denote the family of all  the    
 $   \mathcal{G}    \left(     (a_0,  b_0 ) ,  \ldots , (a_m , b_m ) \,  ;   \,   a_{ m+1}      \,    \right)  $. 
The next   lemma  shows that $  \mathcal{G}  $ consists of dense subsets of  $  \mathbb{P}  $.

\begin{lemma}  \label{SectionWDExtensionLemma}
Let  $  \sigma  \in  \mathbb{P} $. 
Assume   $  \left( a_0, b_0  \right),    \ldots , \left( a_m, b_m  \right) $     is  $ \mathsf{Good}_2 \left(   \sigma  \right)    $. 
Let    $  a_{ m+1 }   \in  V^{  \star }  $ be such that   $ a_0,  \ldots , a_m , a_{ m+1 } $  is   $ \mathsf{Good}_1 \left(   \sigma   \right)   $. 
Then, there exist  $  \tau \in  \mathbb{P}  $  and  $  b_{ m+1}  \in  V^{  \star }  $  such that  
\begin{enumerate}
\item   $  \sigma  \subseteq  \tau  $

\item  the  sequence 
 $  \left( a_0, b_0  \right) $, $     \ldots  $,    $  \left( a_m, b_m  \right)   $,      $    \left( a_{ m+1 } , b_{ m+1}   \right) $    
  is  $  \mathsf{Good}_2  \left(   \tau  \right)   $. 
\end{enumerate}
\end{lemma}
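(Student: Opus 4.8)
The plan is to extend the given partial isomorphism to one that also matches $a_{m+1}$, reading off $b_{m+1}$ as the image of $a_{m+1}$ and enlarging $\sigma$ to $\tau$ just enough to accommodate the new points. Write $\Delta^a := \Delta(\sigma; \vec a)$, $\Delta^b := \Delta(\sigma; \vec b)$ and $\Delta^{a+} := \Delta(\sigma; \vec a, a_{m+1})$, and let $f : (\Delta^a, \in^{\sigma}) \to (\Delta^b, \in^{\sigma})$ be the isomorphism witnessing that $(a_0,b_0),\ldots,(a_m,b_m)$ is $\mathsf{Good}_2(\sigma)$. If $a_{m+1} \in \Delta^a$ we are done on taking $b_{m+1} := f(a_{m+1})$ and $\tau := \sigma$, so assume $a_{m+1} \notin \Delta^a$ and set $N := \Delta^{a+} \setminus \Delta^a$, a nonempty and in general infinite set all of whose elements lie in $\mathsf{dom}(\sigma)$ because $\vec a, a_{m+1}$ is $\mathsf{Good}_1(\sigma)$. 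I would build an injection $g \supseteq f$ on $\Delta^{a+}$ sending $N$ to fresh points, and define $\tau \supseteq \sigma$ on those fresh points so that $g$ becomes an $\in^{\tau}$-isomorphism onto $\Delta(\tau; \vec b, g(a_{m+1}))$.

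The structural observation that makes the membership-in-old-points realizable is that the interface between $N$ and $\Delta^a$ is trivial: for every $e \in N$ either $e \in \sigma(w)$ for all $w \in \Delta^a$, or $e \in \sigma(w)$ for no $w \in \Delta^a$. Indeed, if $e \in \sigma(w_1) \setminus \sigma(w_2)$ with $w_1, w_2 \in \Delta^a$, then clause (2) in the definition of $\Delta(\sigma; \vec a)$ forces $e \in \Delta^a$, contradicting $e \in N$. Consequently I can realize the old-membership profile of each $e \in N$ by drawing its copy $g(e)$ from one of the two infinite reservoirs guaranteed by clause (1b): if $e$ lies in every $\sigma(w)$ I take $g(e) \in \cap\, \sigma[\mathsf{dom}(\sigma)] \setminus \mathsf{dom}(\sigma)$, and if $e$ lies in no $\sigma(w)$ I take $g(e) \in V^{\star} \setminus \cup\, \sigma[\mathsf{dom}(\sigma)]$. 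In either case $g(e)$ is fresh and, since the old values $\sigma(f(w))$ are frozen by $\tau \supseteq \sigma$, we automatically get $g(e) \in \tau(f(w)) \Leftrightarrow e \in \sigma(w)$.

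Having chosen $g$ injectively, with infinitely many elements of each reservoir deliberately left unused, I would set $b_{m+1} := g(a_{m+1})$ and, for each $e \in N$,
\[
\tau\big(g(e)\big) := g\big[\, \{\, v \in \Delta^{a+} :\ v \in \sigma(e)\,\}\, \big] \ \cup\ Z,
\]
where $Z$ is a single fixed set, the same for all $e$, taken inside the unused part of $\cap\, \sigma[\mathsf{dom}(\sigma)]$, disjoint from $g[N]$, and chosen so that each $\tau(g(e)) \in \mathcal{X}$. The within-copy part records exactly the old-in-new and new-in-new memberships, so $g(v) \in \tau(g(e)) \Leftrightarrow v \in \sigma(e)$ for all $v \in \Delta^{a+}$ (this also yields reflexivity, since $e \in \sigma(e)$), while the uniform remainder $Z$, being common to all new values and contained in every old value, cancels in every difference $\tau(v) \setminus \tau(v')$ with $v, v' \in g[\Delta^{a+}]$. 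It is this uniformity of $Z$ that keeps the $\Delta$-closure from leaking out of the copy.

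The verification proceeds in three steps, and the third is where the real work lies. First, $\tau \in \mathbb{P}$: injectivity holds because the within-copy parts are pairwise distinct (as $\sigma$ is one-to-one on $\Delta^{a+}$ and $g$ is injective) and no new value coincides with a frozen old value, reflexivity holds by construction, and clause (1b) survives precisely because infinitely many reservoir elements were reserved and placed in $Z$, keeping $\cap\, \tau[\mathsf{dom}(\tau)] \setminus \mathsf{dom}(\tau)$ and $V^{\star} \setminus \cup\, \tau[\mathsf{dom}(\tau)]$ infinite. Second, $g$ is an $\in^{\tau}$-isomorphism with $g(a_i) = b_i$ for all $i \le m+1$, by the membership bookkeeping above together with the fact that $f$ is already an isomorphism. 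Third, and this is the main obstacle, one must show $\Delta(\tau; \vec b, b_{m+1}) = g[\Delta^{a+}]$ and that $\vec b, b_{m+1}$ is $\mathsf{Good}_1(\tau)$: the inclusion $\subseteq$ uses that $g[\Delta^{a+}]$ is $\tau$-closed, which is exactly where the cancellation of $Z$ in differences and the transport of the preimage-closure of $\Delta^{a+}$ through $g$ are needed, while $\supseteq$ follows because $g$ transports the generating operations; the adjunction-closure demanded by $\mathsf{Good}_1$ transfers from $\vec a, a_{m+1}$ via $g$. Combined with $\mathsf{Good}_1(\tau)$ for $\vec a, a_{m+1}$ (inherited from $\mathsf{Good}_1(\sigma)$ by Lemma \ref{FirstLemmaGood}), this makes $(a_0,b_0),\ldots,(a_{m+1},b_{m+1})$ a $\mathsf{Good}_2(\tau)$ sequence, as required. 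The delicate point throughout is that $N$ may be infinite, so the simultaneous choice of fresh copies must keep both reservoirs infinite; if that resists a one-shot definition, I would instead build $\tau$ as an increasing union of finite approximations exactly as in the proof of Lemma \ref{ZerothLemmaGood}, partitioning each reservoir into infinitely many infinite pieces and consuming one piece per stage.
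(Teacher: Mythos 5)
Your construction has the same skeleton as the paper's (reservoirs from clause (1b), the interface dichotomy for elements of $N$, extending $f$ to $g$ by fresh points, freezing the old values), but the definition of the new values contains a genuine gap. You set $\tau(g(e)) := g\left[\,\sigma(e) \cap \Delta(\sigma;\vec a,a_{m+1})\,\right] \cup Z$ with a single $Z \subseteq \cap\,\sigma\left[\mathsf{dom}(\sigma)\right]$ "chosen so that each $\tau(g(e)) \in \mathcal{X}$", but such a $Z$ need not exist. Membership in $\mathbb{P}$ demands that every value of $\tau$ have finite symmetric difference with $S := \lbrace \mathsf{c}_k : k \in \omega \rbrace$, and this imposes two requirements that your constraints on $Z$ cannot meet. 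First, since $Z$ only adds points, you need $g\left[\,\sigma(e)\cap\Delta(\sigma;\vec a,a_{m+1})\,\right] \setminus S$ to be finite; this set contains $f\left[\,\sigma(e)\cap\Delta(\sigma;\vec a)\,\right]$, and $f$ is only an abstract $\in^{\sigma}$-isomorphism: it preserves membership structure but not position relative to $S$, so nothing in $\mathsf{Good}_2(\sigma)$ prevents it from sending infinitely many elements of $\sigma(e)\cap\Delta(\sigma;\vec a)$ (an infinite set, since $\sigma(e)$ is cofinite in $S$ up to a finite set while $\Delta(\sigma;\vec a)\cap S$ is infinite) to points of $\Delta(\sigma;\vec b)$ outside $S$; then $\tau(g(e)) \notin \mathcal{X}$ no matter what $Z$ is. Second, you need $Z$ to cover all but finitely much of $S$ not already covered, yet you also need $Z \subseteq \cap\,\sigma\left[\mathsf{dom}(\sigma)\right]$ for your cancellation argument, and when $\mathsf{dom}(\sigma)$ is infinite that intersection need not be cofinite in $S$. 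Your fallback (stage-wise construction as in Lemma \ref{ZerothLemmaGood}) does not help, because the problem is not the size of $N$ but the shape of the assigned values.

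The paper's proof repairs exactly this point by never building a new value from scratch: it fixes $w^{\star} \in \Delta(\sigma;\vec a)$, writes each $\sigma(v)$ for $v \in N$ as the finite perturbation $\sigma(v) = \left(\sigma(w^{\star})\setminus A_v\right)\cup B_v$ with $A_v, B_v$ finite subsets of $\Delta(\sigma;\vec a,a_{m+1})$ (finiteness is free, since any two members of $\mathcal{X}$ differ finitely), and defines $\tau(g(v)) := \left(\sigma\left(f(w^{\star})\right) \setminus g\left[A_v\right]\right) \cup g\left[B_v\right]$. Equivalently, in your notation, the correct uniform remainder is $Z := \sigma\left(f(w^{\star})\right) \setminus g\left[\Delta(\sigma;\vec a,a_{m+1})\right]$ --- the outside-the-copy part of an \emph{existing old value}, not a subset of the intersection reservoir. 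Anchoring to $\sigma(f(w^{\star}))$ makes membership in $\mathcal{X}$ automatic, gives all new values identical behaviour outside the copy (which is what your uniform $Z$ was trying to simulate, with cancellation in differences now holding up to finite sets, which is all the closure computations need), and injectivity of $\tau$ follows from the observation that $A_v \setminus \Delta(\sigma;\vec a) \neq \emptyset$ or $B_v \setminus \Delta(\sigma;\vec a) \neq \emptyset$ for $v \in N$. With that single change of definition, the rest of your outline, including the dichotomy argument for old-new memberships, goes through as in the paper.
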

\begin{proof}
 
  By how $  \mathbb{P} $ is defined,  
  $   \cap    \sigma  \left[   \mathsf{dom} \left(  \sigma  \right)    \right]    \setminus  \mathsf{dom} \left(  \sigma  \right)  $
 and    $ V^{  \star }  \setminus   \cup    \sigma  \left[   \mathsf{dom} \left(  \sigma  \right)    \right]    $
 are both infinite. 
 Choose two infinite sets   $  C,   D   \subseteq  V^{  \star }  $  such that  
 \begin{enumerate}
\item   $  C  \subseteq   \cap    \sigma  \left[   \mathsf{dom} \left(  \sigma  \right)    \right]     \setminus    \mathsf{dom} \left(  \sigma  \right)   $  
and 
 $   \cap    \sigma  \left[   \mathsf{dom} \left(  \sigma  \right)    \right]      \setminus       \left(  C  \cup    \mathsf{dom} \left(  \sigma  \right)    \right) $ 
is infinite.

 \item   $  D  \subseteq    V^{  \star }  \setminus   \cup    \sigma  \left[   \mathsf{dom} \left(  \sigma  \right)    \right]       $  
and  $   V^{  \star }  \setminus   \left(  D  \cup   \cup    \sigma  \left[   \mathsf{dom} \left(  \sigma  \right)    \right]     \right)   $ 
is infinite. 
 \end{enumerate}
Since     $  \left( a_0, b_0  \right),    \ldots , \left( a_m, b_m  \right) $     is  $ \mathsf{Good}_2 \left(   \sigma  \right)    $, 
there exists an isomorphism 
    \[
f :       \left(  \Delta  \left(  \sigma  \,  ;  \,     \vec{a}  \right)    \,  ,   \,  \in^{   \sigma   }    \right)       \to   
      \left(   \Delta      \left(   \sigma     \,  ;  \,    \vec{b}     \right)   \,  ,   \,  \in^{   \sigma   }    \right)  
      \]
  such that        $  f (  a_i ) =  b_i  $ for  all   $ i  \in  \lbrace 0, 1,  \ldots , m  \rbrace $. 
We choose   a one-to-one map 
\[
g :     \Delta  \left(  \sigma  \,  ;  \,     a_0, a_1,  \ldots , a_{ m+1}   \right)     \to   V^{  \star }  
\]
such that: 
\begin{enumerate}
\item  $g$ is an extension of   $  f $.

\item For  $  x  \in   \Delta  \left(  \sigma  \,  ;  \,     a_0, a_1,  \ldots , a_{ m+1}   \right)    \setminus  
 \Delta  \left(  \sigma  \,  ;  \,     a_0, a_1,  \ldots , a_{ m}   \right)   $
\[
x   \in  \cap   \sigma  \left[   \Delta  \left(  \sigma  \,  ;  \,     a_0, a_1,  \ldots , a_{ m}   \right)    \right]  
\     \       \Leftrightarrow        \         \   
g(x)    \in   C
\        .
\]

\item For  $  x  \in   \Delta  \left(  \sigma  \,  ;  \,     a_0, a_1,  \ldots , a_{ m+1}   \right)    \setminus  
 \Delta  \left(  \sigma  \,  ;  \,     a_0, a_1,  \ldots , a_{ m}   \right)   $
\[
x    \not\in  \cup   \sigma  \left[   \Delta  \left(  \sigma  \,  ;  \,     a_0, a_1,  \ldots , a_{ m}   \right)    \right]  
\     \       \Leftrightarrow        \         \   
g(x)    \in   D
\        .
\]
\end{enumerate}

We define  $  \tau  \in  \mathbb{P} $  such that  
$  \sigma   \subseteq   \tau  $  and  $g$ is an isomorphism 
    \[
g :       \left(  \Delta  \left(  \tau  \,  ;  \,     a_0,  \ldots , a_m , a_{ m+1}  \right)    \,  ,   \,  \in^{   \tau   }    \right)     
  \to   
      \left(   \Delta      \left(   \tau     \,  ;  \,      b_0,  \ldots , g_m ,   g  \left( a_{ m+1}  \right)    \right)   \,  ,   \,  \in^{   \tau   }    \right) 
  \       .     
      \]
We use the fact that any two sets in  $  \mathcal{X} $  differ by finitely  many elements.    
Fix  $  w^{  \star }   \in       \Delta  \left(  \sigma  \,  ;  \,     a_0,  \ldots , a_m   \right)       \,   $. 
Pick  $   v  \in  \Delta  \left(  \sigma  \,  ;  \,     a_0,  \ldots , a_m , a_{ m+1}  \right)     \setminus  
\Delta  \left(  \sigma  \,  ;  \,     a_0,  \ldots , a_m  \right)        \,     $. 
There exists a unique pair of finite sets  $  A_v ,  B_v  \subseteq  V^{  \star }     $   such that: 
\begin{enumerate}
\item   $  A_v  \subseteq    \sigma  \left(   w^{  \star }  \right)  $  

\item   $  B_v  \cap    \sigma  \left(   w^{  \star }  \right)   =   \emptyset  $

\item   $   \sigma (v)  =   \left(    \sigma  \left(   w^{  \star }  \right)   \setminus  A_v  \right)   \cup   B_v        \,  $. 
\end{enumerate}
We have 
\[
 A_v   \cup   B_v  \subseteq   \Delta  \left(  \sigma  \,  ;  \,     a_0,  \ldots , a_m , a_{ m+1}  \right)  
 \]
 since  $  \sigma (s )  \setminus  \sigma (t)   \subseteq    \Delta  \left(  \sigma  \,  ;  \,     a_0,  \ldots , a_m , a_{ m+1}  \right)    $  
 for all $  s, t  \in   \Delta  \left(  \sigma  \,  ;  \,     a_0,  \ldots , a_m , a_{ m+1}  \right)    $. 
We have  
\[
 A_v  \setminus    \Delta  \left(  \sigma  \,  ;  \,     a_0,  \ldots , a_m  \right)     \neq   \emptyset   
 \      \      \vee       \        \  
 B_v  \setminus    \Delta  \left(  \sigma  \,  ;  \,     a_0,  \ldots , a_m   \right)    \neq   \emptyset 
 \tag{*}
\]
since, by how   $  \Delta  \left(  \sigma  \,  ;  \,     a_0,  \ldots , a_m   \right)     $   is defined, 
   for all  finite sets   $  Y_0,  Y_1    \subseteq   \Delta  \left(  \sigma  \,  ;  \,     a_0,  \ldots , a_m   \right)     $  there exists  
$  u  \in   \Delta  \left(  \sigma  \,  ;  \,     a_0,  \ldots , a_m   \right)     $  such that  
$  \sigma (u)  =   \left(    \sigma  \left(   w^{  \star }  \right)   \setminus  Y_0   \right)   \cup   Y_1        \,  $. 
Let  
\[
\tau  \left(   g(v) \right)   :=   
 \left(    \sigma  \left(   g \left( w^{  \star }   \right)   \right)   \setminus   g  \left[ A_v   \right]  \right)   \cup    g  \left[ B_v   \right]  
 \          .
\]
Since  $  \sigma $ and $g$   are  one-to-one, 
it follows from (*) and the uniqueness of $ (A_v, B_v ) $  that $  \tau $  is one-to-one. 
By  how  $  \tau $  is defined, we have 
\begin{enumerate}
\item   $  \cap   \tau  \left[   \mathsf{dom} \left(  \tau  \right)   \right]    \supseteq 
\left(  \cap   \sigma  \left[   \mathsf{dom} \left(  \sigma  \right)   \right]     \setminus  C   \right)  $

\item     $  \cup   \tau  \left[   \mathsf{dom} \left(  \tau  \right)   \right]    \subseteq 
\cup   \sigma  \left[   \mathsf{dom} \left(  \sigma  \right)   \right]     \cup   D     \,      $.
\end{enumerate}
It follows that   $  \tau  \in   \mathbb{P} $ by how $C$ and $D$ were chosen.

We need to check that  for all  $  u,  z  \in   \Delta  \left(  \sigma  \,  ;  \,     a_0,  \ldots , a_m , a_{ m+1 }  \right)     $
\[
u  \in  \tau ( z )  
\    \Leftrightarrow    \   
g(u)  \in   \tau \left(  g(z)   \right)  
\       .     \tag{**}
\]
We have two cases: 
(I)  $  z  \in   \Delta  \left(  \sigma  \,  ;  \,     a_0,  \ldots , a_m   \right)     $; 
(II)  $  z    \not\in   \Delta  \left(  \sigma  \,  ;  \,     a_0,  \ldots , a_m   \right)     $.
We consider (I). 
If  $  u \in   \Delta  \left(  \sigma  \,  ;  \,     a_0,  \ldots , a_m   \right)   $, 
then (**) holds since  $f$  is an isomorphism and $g$  agrees with $f$  on  $   \Delta  \left(  \sigma  \,  ;  \,     a_0,  \ldots , a_m   \right)     $. 
Assume  $  u   \not\in   \Delta  \left(  \sigma  \,  ;  \,     a_0,  \ldots , a_m   \right)   $. 
Then, by how  $g$  is defined and the fact that  $  \sigma (s)  \setminus  \sigma (t)   \subseteq    \Delta  \left(  \sigma  \,  ;  \,     a_0,  \ldots , a_m   \right)   $  for all $  s, t  \in   \Delta  \left(  \sigma  \,  ;  \,     a_0,  \ldots , a_m   \right)    $, 
we have the following cases: 
\begin{enumerate}
\item  $  u  \in     \cap   \sigma   \left[  \Delta  \left(  \sigma  \,  ;  \,     a_0,  \ldots , a_m   \right)     \right]  $  
and  $  g(u)  \in  C    \subseteq    \cap   \sigma   \left[  \Delta  \left(  \sigma  \,  ;  \,     b_0,  \ldots , b_m   \right)     \right]  $

\item     $  u   \not\in     \cup   \sigma   \left[  \Delta  \left(  \sigma  \,  ;  \,     a_0,  \ldots , a_m   \right)     \right]  $  
and  $  g(u)  \in  D    \subseteq  V^{  \star }  \setminus     \cup   \sigma   \left[  \Delta  \left(  \sigma  \,  ;  \,     b_0,  \ldots , b_m   \right)     \right]  $  
\end{enumerate}
This shows that (**) holds.

We consider (II). We have 
\[
 \tau (z)  =     \left(    \sigma  \left(   w^{  \star }  \right)   \setminus  A_z  \right)   \cup   B_z     
 \     \mbox{  and   }     \   
 \tau  \left(   g(z) \right)   =   
 \left(    \sigma  \left(   g \left( w^{  \star }   \right)   \right)   \setminus   g  \left[ A_z   \right]  \right)   \cup    g  \left[ B_z   \right]  
 \       .
 \]
Since  $g$  is one- to-one
\[
u  \in A_z   \   \Leftrightarrow   \  g(u)  \in    g  \left[ A_z   \right] 
\      \       \      \mbox{  and   }      \        \        \  
u  \in B_z   \   \Leftrightarrow   \  g(u)  \in    g  \left[ B_z   \right] 
\      .
\]
Hence, to show that  (**) holds, it suffices to show that  
$  u  \in    \sigma  \left(   w^{  \star }  \right)    $  if and only if   $  g(u)  \in   \sigma  \left(   g \left( w^{  \star }   \right)    \right)    \,   $. 
But this holds by (I). 
This completes the proof. 
\end{proof}

\begin{lemma}
$  \mathcal{G} $ is a countable family of dense subsets of  $  \mathbb{P} $.
\end{lemma}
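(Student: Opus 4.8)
The plan is to dispose of countability first and then establish density, exploiting the deliberately disjunctive shape of the sets making up $\mathcal{G}$. Countability is immediate: each member of $\mathcal{G}$ is indexed by a finite sequence $(a_0,b_0),\ldots,(a_m,b_m)$ of pairs from $V^{\star}\times V^{\star}$ together with a single further element $a_{m+1}\in V^{\star}$. Since $V^{\star}=\{\mathsf{c}_k : k\in\mathbb{Z}\}$ is countable, the collection of all such indices is a countable union (over $m\in\omega$) of countable sets, hence countable, and therefore so is $\mathcal{G}$.

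For density, fix a set $G=\mathcal{G}\left((a_0,b_0),\ldots,(a_m,b_m)\,;\,a_{m+1}\right)$ and an arbitrary condition $\sigma\in\mathbb{P}$; I must produce $\tau\in\mathbb{P}$ with $\sigma\subseteq\tau$ and $\tau\in G$. The first move is the case split that the definition of $G$ builds in: I ask whether there exists any $\tau'\in\mathbb{P}$ with $\sigma\subseteq\tau'$ such that $(a_0,b_0),\ldots,(a_m,b_m)$ is $\mathsf{Good}_2(\tau')$. If no such $\tau'$ exists, then $\sigma$ itself satisfies clause (2) in the definition of $G$, so $\sigma\in G$ and I simply take $\tau=\sigma$.

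In the remaining case I fix such a $\tau'$ and work toward clause (1). Applying Lemma \ref{ZerothLemmaGood} to $\tau'$ with $y=a_{m+1}$, I obtain $\rho\in\mathbb{P}$ with $\tau'\subseteq\rho$, $a_{m+1}\in\mathsf{dom}(\rho)$, and the property that \emph{every} finite sequence drawn from $\mathsf{dom}(\rho)$ is $\mathsf{Good}_1(\rho)$. By monotonicity of $\mathsf{Good}_2$ under extension (Lemma \ref{FirstLemmaGood}), the sequence $(a_0,b_0),\ldots,(a_m,b_m)$ is still $\mathsf{Good}_2(\rho)$; in particular $a_0,\ldots,a_m\in\mathsf{dom}(\rho)$, so together with $a_{m+1}$ the tuple $a_0,\ldots,a_m,a_{m+1}$ lies in $\mathsf{dom}(\rho)$ and is therefore $\mathsf{Good}_1(\rho)$. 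This is exactly the hypothesis of Lemma \ref{SectionWDExtensionLemma}, which I then invoke with $\rho$ in place of $\sigma$ to obtain $\tau\in\mathbb{P}$ with $\rho\subseteq\tau$ and a witness $b_{m+1}\in V^{\star}$ for which $(a_0,b_0),\ldots,(a_m,b_m),(a_{m+1},b_{m+1})$ is $\mathsf{Good}_2(\tau)$. Since $\sigma\subseteq\tau'\subseteq\rho\subseteq\tau$ and $\tau$ satisfies clause (1), we conclude $\tau\in G$, finishing the density argument.

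The step I would be most careful about is recognizing that density does \emph{not} require a witness $b_{m+1}$ to exist for every $\sigma$: clause (2) is engineered precisely to absorb those conditions from which the first $m+1$ pairs can never be made $\mathsf{Good}_2$, so the dichotomy ``extendible to $\mathsf{Good}_2$'' versus ``not extendible'' is exhaustive, and in the extendible branch the three lemmas chain together cleanly. The one bookkeeping point to check in the write-up is that the final $\tau$ produced by Lemma \ref{SectionWDExtensionLemma} really does extend the \emph{original} $\sigma$ and not merely the intermediate $\rho$, which follows from transitivity of $\subseteq$ along $\sigma\subseteq\tau'\subseteq\rho\subseteq\tau$.
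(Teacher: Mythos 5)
Your proof is correct and takes essentially the same route as the paper's: the same dichotomy on whether $\sigma$ admits an extension making $(a_0,b_0),\ldots,(a_m,b_m)$ $\mathsf{Good}_2$ (with clause (2) absorbing the negative case), followed in the positive case by the same chain of Lemma \ref{ZerothLemmaGood}, Lemma \ref{FirstLemmaGood}, and Lemma \ref{SectionWDExtensionLemma} to produce $\tau$ and the witness $b_{m+1}$. If anything, your write-up is slightly more careful than the paper's, since you make explicit both the (trivial) countability argument and the reason $a_0,\ldots,a_m,a_{m+1}$ is $\mathsf{Good}_1$ of the intermediate condition, namely that the whole tuple lies in its domain.
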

\begin{proof}

Consider a set    $  \mathcal{G}    \left(    (a_0,  b_0 ) ,  \ldots , (b_m , b_m ) \,  ;   \,   a_{ m+1}   \right) $ in $  \mathcal{G}_2 $. 
We need to show that it is dense. 
So, pick $   \sigma  \in   \mathbb{P} $. 
We  need to show that there exists  $  \tau  \in  \mathbb{P} $  such that 
  $   \sigma  \subseteq  \tau   \in   \mathcal{G}   \left(    (a_0,  b_0 ) ,  \ldots , (a_m , b_m ) \,  ;   \,   a_{ m+1}   \right) $.
Recall that $  \tau $ needs to satisfies one of the following: 
   \begin{enumerate}
\item   There exists     $  b_{  m+1 }   \in   V^{  \star }  $ such that   
$  (a_0,  b_0 ) $,  $\ldots $,   $(a_m , b_m )  $,      $ (a_{ m+1 } ,  b_{ m+1 } ) $  is  $ \mathsf{Good}_2 \left(   \tau  \right)   $.

\item There is no   $  \tau^{  \prime  }    \in  \mathbb{P}   $  such that 
$  \tau  \subseteq   \tau^{  \prime  }   $   and 
  $  (a_0,  b_0 )   $,   $\ldots   $,  $ (a_m , b_m ) $  is  $ \mathsf{Good}_2  \left(   \tau^{  \prime  }    \right)   $.
 \end{enumerate} 
We have two cases: 
\begin{enumerate}
\item[\textup{(i)}] There is no   $ \mathbb{P}   \ni \sigma^{  \prime } \supseteq  \sigma $  such that 
  $  (a_0,  b_0 ) ,  \ldots , (a_m , b_m ) $  is  $ \mathsf{Good}_2  \left(   \sigma^{  \prime }  \right)   $.

\item[\textup{(ii)}]  There exists    $   \mathbb{P}   \ni    \sigma^{  \prime } \supseteq  \sigma $  such that 
  $  (a_0,  b_0 ) ,  \ldots , (a_m , b_m ) $  is  $ \mathsf{Good}_2 \left(   \sigma^{  \prime }  \right)   $. 
  \end{enumerate}
In case of (i), we can let  $  \tau :=  \sigma $. 
We consider case (ii). 
Let     $ \sigma^{  \prime } \supseteq  \sigma $  be such that 
  $  (a_0,  b_0 ) ,  \ldots , (a_m , b_m ) $  is  $ \mathsf{Good}_2  \left(   \sigma^{  \prime }  \right)   $.
  By Lemma    \ref{ZerothLemmaGood}, 
  there exists  $ \mathbb{P} \ni     \sigma^{  \prime  \prime }   \supseteq   \sigma^{  \prime }  $  such that  
  $  a_0,  \ldots , a_m , a_{ m+1}  $  is  $ \mathsf{Good}_1  \left(   \sigma^{  \prime } \right)   $. 
  By Lemma  \ref{FirstLemmaGood}, 
    $  (a_0,  b_0 ) ,  \ldots , (a_m , b_m ) $  is  $ \mathsf{Good}_2  \left(   \sigma^{  \prime  \prime }  \right)   $.
   By Lemma     \ref{SectionWDExtensionLemma}, 
there exists   $  \mathbb{P} \ni   \tau \supseteq  \sigma^{  \prime  \prime  }   $  and  $  b_{  m+1}  \in V^{  \star }  $  such that 
$  (a_0,  b_0 )  $,    $   \ldots   $,    $ (a_m,  b_m )  $,    $ (a_{ m+1 } ,  b_{ m+1 } ) $  is  $ \mathsf{Good}_2 \left(   \tau  \right)   $. 
This completes the proof.
\end{proof}

\section{Proof}

Since  $  \mathcal{D} $ is a countable family of dense subsets of  $  \mathbb{P} $, 
a $   \mathcal{D} $-generic ideal  $G$ can be constructed by recursion. 
The following theorem  then    completes the proof of Theorem  \ref{maintheorem}.

\begin{theorem}
Let  $ G \subseteq \mathbb{P} $ be  a  $  \mathcal{D} $-generic ideal.  
 Let  $ ( \cdot )^{  \star }    :=   \cup  G  $. 
  Let  $    \mathcal{V}^{ \star }  $ denote the structure  $  \left(  V^{  \star }   ,  \in^{  \star }     \right)   $
  where  for all  $  i, j  \in  \mathbb{Z} $,   we have   $   \mathsf{c}_i   \in^{  \star }   \mathsf{c}_j  $ if and only if  $  \mathsf{c}_i    \in   \mathsf{c}_j^{  \star }  $. 
Then, the following holds: 
\begin{enumerate}
\item  $  (  \cdot )^{  \star }   :  V^{  \star }     \to        \mathcal{X}   $  is a bijection, and hence  
$ \mathcal{V}^{ \star }    \models      \mathsf{WD} +   \mathsf{EXT} +  \mathsf{BU}    +  \mathsf{BI} $. 

\item  For all  $   k, \ell  \in  \mathbb{Z} $,   there exists an automorphism  
$  F^k_{  \ell  }   :    \mathcal{V}^{  \star }    \to      \mathcal{V}^{  \star }    $  such that   
$  F^k_{  \ell  }    \left(   \mathsf{c}_k  \right)   =    \mathsf{c}_{  \ell   }       \,      $. 
  \end{enumerate}
\end{theorem}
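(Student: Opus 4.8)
The plan is to verify the two claims of the theorem separately, relying on the density lemmas already established. For part~(1), I would invoke Lemma~\ref{SecondBasicLemma} directly: since $G$ is $\mathcal{D}$-generic and $\mathcal{T}\cup\mathcal{S}\subseteq\mathcal{D}$, the union $(\cdot)^{\star}=\cup G$ is a bijection $V^{\star}\to\mathcal{X}$. Because $G$ is upward directed and downward closed, the union of the partial one-to-one maps in $G$ is itself a well-defined partial map, and genericity against each $T_k\in\mathcal{T}$ forces totality while genericity against each $S_X\in\mathcal{S}$ forces surjectivity. Injectivity is inherited from the conditions. Once $(\cdot)^{\star}$ is known to be a bijection, Lemma~\ref{FirstBasicLemma} immediately gives $\mathcal{V}^{\star}\models\mathsf{WD}+\mathsf{EXT}+\mathsf{BU}+\mathsf{BI}$.

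The substance is part~(2), where I would fix $k,\ell\in\mathbb{Z}$ and build the automorphism $F^k_\ell$ with $F^k_\ell(\mathsf{c}_k)=\mathsf{c}_\ell$ by a back-and-forth construction. I would enumerate $V^{\star}$ as $\langle e_0, e_1, \ldots\rangle$ and construct a chain of finite $\mathsf{Good}_2(\sigma)$-sequences of pairs, starting from $(\mathsf{c}_k,\mathsf{c}_\ell)$, each recorded in some condition lying in $G$. At each even stage I would handle the next element $e_n$ on the domain side (a ``forth'' step), and at each odd stage the next element on the range side (a ``back'' step), ensuring every element of $V^{\star}$ eventually appears in both the domain and range of the accumulated partial isomorphism. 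The key mechanism is genericity against the sets $\mathcal{G}(\vec{(a,b)}\,;\,a_{m+1})\in\mathcal{G}$: given a current $\mathsf{Good}_2$-sequence witnessed by some $\sigma\in G$ and a new point $a_{m+1}=e_n$, upward directedness of $G$ lets me first pass (via Lemma~\ref{ZerothLemmaGood}) to a condition in $G$ making $a_0,\ldots,a_{m+1}$ be $\mathsf{Good}_1$, and then meeting the relevant dense set in $\mathcal{G}$ produces inside $G$ a condition and a partner $b_{m+1}$ extending the sequence to a $\mathsf{Good}_2$-sequence. I must check that alternative~(2) in the definition of $\mathcal{G}(\cdots)$—``there is no extension making the shorter sequence $\mathsf{Good}_2$''—never fires, which holds precisely because the current sequence \emph{is} already $\mathsf{Good}_2$ via a condition in $G$, so the witnessing condition ruling out alternative~(2) forces alternative~(1). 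The back direction is symmetric, exploiting that $\mathsf{Good}_2$-ness is a symmetric relation on the paired sequences (one reverses the isomorphism $f$).

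Having built an increasing chain of $\mathsf{Good}_2$-sequences covering all of $V^{\star}$ on both sides, I would define $F^k_\ell$ as the union of the induced finite isomorphisms $f$ (each mapping $\Delta(\sigma;\vec a)$ to $\Delta(\sigma;\vec b)$ and sending $a_i\mapsto b_i$). Compatibility of these maps follows because successive sequences extend one another and the witnessing isomorphisms are forced to agree on common points by injectivity. I would then argue $F^k_\ell$ is a total bijection on $V^{\star}$, and that it preserves $\in^{\star}$: for $\mathsf{c}_i,\mathsf{c}_j$ the relation $\mathsf{c}_i\in^{\star}\mathsf{c}_j$ is decided by some condition $\sigma\in G$, and since the relevant points lie in a $\Delta(\sigma;\vec a)$ on which $f$ is an $\in^{\sigma}$-isomorphism, and since $\in^{\sigma}$ agrees with $\in^{\star}$ on the domain of any $\sigma\in G$, preservation transfers to $\in^{\star}$.

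The main obstacle I anticipate is the bookkeeping that guarantees the partial isomorphisms cohere into a single well-defined map and that $\in^{\star}$-preservation genuinely follows from the local $\in^{\sigma}$-preservation supplied by $\mathsf{Good}_2$. Concretely, I must be careful that when membership $\mathsf{c}_i\in^{\star}\mathsf{c}_j$ is witnessed by $\sigma\in G$, the points $\mathsf{c}_i,\mathsf{c}_j$ and their images all lie within a single $\Delta$-set handled by one stage of the construction—this is exactly the role of clauses (i)--(ii) and of the closure properties built into $\Delta(\sigma;\vec x)$, together with Lemma~\ref{FirstLemmaGood} ensuring $\mathsf{Good}_2$-ness is preserved under extending the condition. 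Verifying that these local witnesses assemble correctly, and that alternative~(2) in the definition of $\mathcal{G}$ is never the operative clause along $G$, is where the real care lies.
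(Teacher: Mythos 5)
There is a genuine gap at the base of your back-and-forth construction. You start the chain ``from $(\mathsf{c}_k,\mathsf{c}_\ell)$, each recorded in some condition lying in $G$'', but nothing in the mechanism you describe produces a condition in $G$ for which this \emph{specific} pair is $\mathsf{Good}_2$. The dense sets $\mathcal{G}\left( (a_0,b_0),\ldots,(a_m,b_m) \, ; \, a_{m+1} \right)$ only supply a partner $b_{m+1}$ \emph{chosen by genericity}; you never get to prescribe both coordinates of a pair. Starting from the empty sequence with $a_0 = \mathsf{c}_k$, meeting $\mathcal{G}\left( \emptyset ; \mathsf{c}_k \right)$ inside $G$ yields some $x$ such that $(\mathsf{c}_k, x)$ is $\mathsf{Good}_2$, and your construction would then build an automorphism sending $\mathsf{c}_k \mapsto x$ --- with no control over whether $x = \mathsf{c}_\ell$. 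The paper closes exactly this hole with Lemma~\ref{SecondLemmaGood}, which you never invoke: it meets both $\mathcal{G}\left( \emptyset ; \mathsf{c}_k \right)$ and $\mathcal{G}\left( \emptyset ; \mathsf{c}_\ell \right)$ inside $G$, uses upward directedness of $G$ together with Lemma~\ref{FirstLemmaGood} to obtain a single $\sigma_0 \in G$ for which the one-element sequences $\mathsf{c}_k$ and $\mathsf{c}_\ell$ are each $\mathsf{Good}_1\left( \sigma_0 \right)$, and then applies Lemma~\ref{SecondLemmaGood} to conclude that the prescribed pair $\left( \mathsf{c}_k , \mathsf{c}_\ell \right)$ is $\mathsf{Good}_2\left( \sigma_0 \right)$. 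That lemma (any two $\mathsf{Good}_1$ singletons form a $\mathsf{Good}_2$ pair) is the one place where the target $\mathsf{c}_\ell$ actually enters the construction; without it, or something equivalent, your argument proves only that some automorphism moves $\mathsf{c}_k$ somewhere, not to $\mathsf{c}_\ell$.

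A secondary flaw: you define $F^k_\ell$ as the union of the witnessing isomorphisms $f$ on the $\Delta$-sets and assert they ``agree on common points by injectivity''. This is not justified: the $\Delta$-sets of $\mathsf{Good}_1$ sequences are infinite, the witnessing isomorphism at a given stage is not unique, and successive witnesses are only constrained to agree on the paired points $a_i \mapsto b_i$; injectivity forces nothing off those points, so the union of the $f$'s need not even be a function. The paper instead sets $F^k_\ell = \bigcup_n g_n$ where $g_n$ is just the finite map $a_i \mapsto b_i$; coherence is then automatic, each $g_n$ is a partial $\in^{\star}$-embedding because it is a restriction of the stage-$n$ witness, and because all points involved lie in $\mathsf{dom}\left( \sigma_n \right)$, where $\in^{\sigma_n}$ and $\in^{\star}$ agree. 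The remainder of your outline --- part (1) via Lemmas~\ref{FirstBasicLemma} and~\ref{SecondBasicLemma}, the ``alternative (2) never fires'' argument via directedness and Lemma~\ref{FirstLemmaGood}, the use of Lemma~\ref{ZerothLemmaGood} before each forth step, and the symmetric back step --- matches the paper's proof and is sound.
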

\begin{proof}

Lemma  \ref{FirstBasicLemma}  and  Lemma \ref{SecondBasicLemma}  show that    (1) holds. 
We show that (2) holds. 
Pick   $   k, \ell  \in  \mathbb{Z} $.   
We construct    $  F^k_{  \ell  }    $  by  a back-and-forth argument. 
Since  $  V^{  \star }  $  is countable,  fix   a   one-to-one enumeration   
 $   \left(  w_n  :   \   n   \in  \omega \setminus  \lbrace 0  \rbrace   \,   \right)  $ of  $  V^{  \star }  $. 
We construct an increasing sequence  $  \left(   g_n :    \     n  \in   \omega   \,  \right)  $ of finite partial    one-to-one maps  $    V^{  \star }   \to   V^{  \star }   $  such that
the following holds for all $  n  \in  \omega  $:  
\begin{enumerate}
\item  $  g_0   =  \lbrace   \left(  \mathsf{c}_k  ,  \mathsf{c}_{  \ell  }   \right)    \rbrace  $. 

\item  If  $n  >0 $  is even,  then  $  g_n  \left(  w_{  \frac{n}{2} }   \right)  $  is defined.

\item  If  $n$  is odd,  then  $    w_{  \frac{n+1}{2}  }    $  is  in the image of  $  g_n  $.

\item There exists  $  \sigma_n   \in  G  $  such that   any enumeration of the graph of  $  g_n  $   gives a 
$  \mathsf{Good}_2  \left(  \sigma_n  \right)  $  sequence.  
\end{enumerate}
First,  let us observe that  it follows from (4) that  each   each  $g_n $  is a partial embedding   $   \mathcal{V}^{  \star }   \to      \mathcal{V}^{  \star }   $, 
and the map    $   F^k_{  \ell  }   :=  \bigcup_{ n  \in  \omega  }   g_n     $  is thus an automorphis of     $   \mathcal{V}^{  \star }   $
since it is  a bijection by (2)-(3).  
Indeed,  let  $  (x_0, y_0) $,   $  \ldots  $,  $(x_k, y_k ) $ be an enumeration of the graph of  $  g_n $. 
By assumption,  the sequence  $  (x_0, y_0) $,   $  \ldots  $,  $(x_k, y_k ) $   is  $  \mathsf{Good}_2  \left(   \sigma_n  \right)      \,   $.
This means   in particular  that  $   \lbrace  x_0,  y_0 ,  \ldots  , x_k,  y_k   \rbrace   \subseteq    \mathsf{dom} \left(  \sigma_n   \right)   $  
and     there exists  an isomorphism 
\[
f   :     \left(    \Delta \left(  \tau  \, ;   \vec{x}   \right)    \,  ,   \,   \in^{  \sigma_n }    \right)     \to  
\left(    \Delta \left(  \tau  \, ;   \vec{y}   \right)    \,  ,   \,   \in^{  \sigma_n }    \right)    
\]
such that  $  f   \left(  x_i   \right)  =  y_i  $  for all $  i  \in  \lbrace 0 ,  1,  \ldots  ,  k  \rbrace  $. 
In particular,    $g_n $  is an isomorphism 
\[
g_n :     \left(      \lbrace x_0, x_1,  \ldots , x_k  \rbrace   \,  ,   \,   \in^{  \sigma_n }    \right)     \to  
\left(    \lbrace y_0, y_1,  \ldots , y_k  \rbrace    \,  ,   \,   \in^{  \sigma_n }    \right)    
\          .
\]
Recall that  
\[
\in^{  \sigma_n }  
   \,       =      \,    
  \left\{    (u, v)  \in  V^{  \star }  \times   \mathsf{dom} \left(  \sigma_n   \right)  :    \        u   \in  \sigma_n   (u)      \,         \right\}  
\        .  
\]
Since   $  \in^{  \star }  \,   =   \,   \bigcup_{    \tau   \in  G    }     \in^{  \tau   }      $
and    $   \lbrace  x_0,  y_0 ,  \ldots  , x_k,  y_k   \rbrace   \subseteq    \mathsf{dom} \left(  \sigma_n   \right)   $, 
the map    $  g_n  $  is  an isomorphism   
\[
g_n :     \left(      \lbrace x_0, x_1,  \ldots , x_k  \rbrace   \,  ,   \,   \in^{  \star  }    \right)     \to  
 \left(      \lbrace y_0, y_1,  \ldots , y_k  \rbrace   \,  ,   \,   \in^{  \star  }    \right)    
\          .
\] 
This completes the proof that  $g_n $  is a partial embedding     $   \mathcal{V}^{  \star }   \to      \mathcal{V}^{  \star }       \,   $.

Next, we show that  $g_0 $  satisfies  (1)-(4). 
We need to show that   $g_0 $  satisfies  (4). 
Since  $  \mathcal{G} $  consists of dense subsets of   $  \mathbb{P} $,   there exists  $  \tau_0,  \tau_1  \in  \mathbb{P} $  such that  
\[
\tau_0  \in G  \cap  \mathcal{G} \left(  \emptyset ;  \mathsf{c}_k    \right)  
\      \mbox{  and   }   \   
\tau_1  \in G  \cap  \mathcal{G} \left(  \emptyset ;  \mathsf{c}_{  \ell  }    \right)  
\          .
\]
Since  the empty sequence is   $  \mathsf{Good}_2  \left(  \sigma \right)  $   for  all $  \sigma  \in  \mathbb{P}  $, 
it follows from how   $   \mathcal{G} \left(  \emptyset ;  \mathsf{c}_k    \right)   $  
and   $   \mathcal{G} \left(  \emptyset ;  \mathsf{c}_{  \ell  }     \right)   $  are defined that 
there exist  $   x, y  \in  V^{  \star }   $  such that  
$  \left(   \mathsf{c}_k   ,   x  \right)  $  is    $  \mathsf{Good}_2  \left(  \tau_0  \right)  $
and 
$  \left(   \mathsf{c}_{  \ell }   ,   y  \right)  $  is    $  \mathsf{Good}_2  \left(  \tau_1  \right)  $. 
Since  $G$ is a  $  \mathcal{D} $-generic ideal,  
there exists $  \sigma_0  \in  D  $  such that  $  \tau_0  \subseteq   \sigma_0  $  and  $  \tau_1  \subseteq   \sigma_0  $. 
By Lemma \ref{FirstLemmaGood},  
$  \left(   \mathsf{c}_k   ,   x  \right)  $  is    $  \mathsf{Good}_2  \left(  \sigma_0  \right)  $
and 
$  \left(   \mathsf{c}_{  \ell }   ,   y  \right)  $  is    $  \mathsf{Good}_2  \left(  \sigma_0  \right)  $. 
In particular, 
the one-element sequence    $    \mathsf{c}_k  $  is   $  \mathsf{Good}_1  \left(  \sigma_0  \right)  $,  
and  the one-element sequence    $    \mathsf{c}_{  \ell }  $  is   $  \mathsf{Good}_1  \left(  \sigma_0  \right)     \,    $. 
By Lemma  \ref{SecondLemmaGood}, 
$  \left(   \mathsf{c}_{  k }   ,    \mathsf{c}_{  \ell }   \right)  $  is    $  \mathsf{Good}_2  \left(  \sigma_0  \right)        \,   $. 
Thus,  $g_0$  satisfies (1)-(4).

Finally,  assume  $  g_0,  \ldots ,  g_n  $  have been defined. 
We  show how to  define   $ g_{ n+1}  $. 
We assume  $ n  +1  $  is even; the other case is symmetric
since a sequence   $ (x_0,  y_0)  $,  $  \ldots  $,  $(x_j , y_j )  $  is   $  \mathsf{Good}_2  \left(  \sigma \right)  $  
if and only if     $ (y_0,  x_0)  $,  $  \ldots  $,  $(y_j , x_j )  $  is   $  \mathsf{Good}_2  \left(  \sigma \right)  $.
Let  $  (a_0,  b_0 ) $,  $  \ldots  $,   $  (a_{ m} ,  b_{ m }  )  $ be an enumeration of the graph of  $  g_n  $. 
Let  $  a_{ m+1}  :=     w_{  \frac{n+1}{2} }   $. 
By assumption,  there exists  $  \sigma_n  \in  G  $  such that    $  (a_0,  b_0 ) $,  $  \ldots  $,   $  (a_{ m} ,  b_{ m}  )  $ is 
 $  \mathsf{Good}_2  \left(  \sigma_n \right)    \,    $.  
Since   $  \mathcal{G} $  consists of dense subsets of   $  \mathbb{P} $,   there exists  $  \sigma_{ n+1}  \in  \mathbb{P} $  such that  
\[
\sigma_n   \subseteq   \sigma_{ n+1}  
\      \mbox{  and   }   \   
\sigma_{ n+1}   \in G  \cap  \mathcal{G}   \left(  (a_0, b_0) ,  \ldots  ,( a_m, b_m )   ;   a_{ m+1}  \right) 
\          .
\]
Since  
 $  (a_0,  b_0 ) $,  $  \ldots  $,   $  (a_{ m} ,  b_{ m}  )  $ is  $  \mathsf{Good}_2  \left(  \sigma_{ n}  \right)      $, 
Lemma \ref{FirstLemmaGood} tells us that  
     $  (a_0,  b_0 ) $,  $  \ldots  $,   $  (a_{ m} ,  b_{ m}  )  $ is   $  \mathsf{Good}_2  \left(  \sigma_{ n+1}  \right)      \,    $. 
 Hence, by how  the set  $  \mathcal{G}   \left(  (a_0, b_0) ,  \ldots  ,( a_m, b_m )   ;   a_{ m+1}  \right)   $  is defined, 
 there exists  $  b_{ m+1}  \in  V^{  \star  }  $  such that  
  $  (a_0,  b_0 ) $,  $  \ldots  $,   $  (a_{ m+1} ,  b_{ m+1}  )  $ is   $  \mathsf{Good}_2  \left(  \sigma_{ n+1}  \right)  $. 
 We extend  $  g_n $  to  $  g_{ n+1}  $  by setting  $  g_{ n+1}  \left(    a_{ m+1}   \right)  :=  b_{ m+1}   $. 
\end{proof}

\section*{ }

\end{document}